\documentclass[]{elsarticle}

\usepackage{amssymb,amsmath,amsthm}
\usepackage[inline]{enumitem}

\usepackage{todonotes}

\usepackage{color}

% colors used for revisions.
\newcommand{\firstRef}[1]{\textcolor{black}{#1}}
\newcommand{\secondRef}[1]{\textcolor{black}{#1}}
\newcommand{\ouredit}[1]{\textcolor{black}{#1}}

% Input all the different symbols.
% All the different symbols I define for LaTeX. 
\newtheorem{theorem}{Theorem}

\newtheorem{lemma}{Lemma}

\newtheorem{remark}{Remark}

% % My macros for this project

\newcommand{\de}{\delta}
\newcommand{\Dt}{\Delta t}

\newcommand{\Pb}{Q_{\Delta t}} % ApproximaDon't let me bound your creativity though te P measure
 % Approximate P Lie measure
 % Approximate P Strang measure

%\newcommand{\mb}{\mu_{Q}} % stationary measure of Pb
\newcommand{\mb}{\mu_{\Dt}}
\newcommand{\mo}{\mu_{P}} % stationary measure of Po

\newcommand{\Po}{P_{\Delta t}} % Original P measure
\newcommand{\si}{\sigma}
\newcommand{\diam}{\mathrm{diam}}
\newcommand{\arctanh}{\mathrm{atanh}}

\newcommand{\Qdt}{Q_{\Delta t}}

% Per particle RER

% Entropy production rate 
\newcommand{\He}{\mathrm{EPR}}
\newcommand{\EPR}{\mathrm{EPR}}
\newcommand{\EP}{\mathrm{EP}}

% Caligraphic L -- used for Lie
\newcommand{\Lie}{\mathrm{Lie}}
% Caligraphic S -- used for Strang
\newcommand{\Strang}{\mathrm{Str}}

% shortcut for the delta_{\si'}(\si)
\newcommand{\des}{\delta_{\sigma'}(\sigma)}

\title{Information Criteria for quantifying loss of reversibility in
  parallelized KMC}
\author[auth]{Konstantinos Gourgoulias\corref{cor-author}}
\cortext[cor-author]{Corresponding author. e-mail: gourgoul@math.umass.edu}
\ead{gourgoul@math.umass.edu}

\author[auth]{Markos A. Katsoulakis}
\ead{markos@math.umass.edu}

\author[auth]{Luc Rey-Bellet}
\ead{luc@math.umass.edu}

\address[auth]{Department of Mathematics and Statistics,
Lederle Graduate Research Tower,
University of Massachusetts,
710 N. Pleasant Street,
Amherst, MA, 01003-9305}

\begin{document}

\begin{abstract}
\ouredit{
Parallel Kinetic Monte Carlo (KMC) is a potent tool to simulate
stochastic particle systems efficiently. However, despite
literature on quantifying domain decomposition errors  of the particle system for  this class of algorithms in the
short and in the long time regime, no study yet explores and quantifies the loss of   time-reversibility
in  Parallel KMC. Inspired by concepts from non-equilibrium statistical
mechanics,  we propose the entropy production per unit time, or entropy production rate,
given in terms of an observable and a corresponding estimator, as a metric 
that quantifies the loss of reversibility. Typically, this is a quantity that cannot be computed
explicitly for Parallel KMC, which is why we develop \textit{a
  posteriori} estimators that have good scaling properties with respect to the
size of the system. Through these estimators, we can connect the different
parameters of the scheme, such as the communication time step of the parallelization, the
choice of the domain decomposition, and the computational schedule, with its
performance in controlling the loss of reversibility. From this point of view,
the entropy production rate can be seen both as an information criterion to compare the  reversibility of 
different parallel schemes and as
a tool to diagnose reversibility issues with a particular scheme. As a demonstration, we use
Sandia Lab's SPPARKS software to compare different parallelization schemes  and  different domain (lattice) decompositions. 
}
\end{abstract}

\begin{keyword}
parallel kinetic Monte Carlo\sep operator splitting schemes\sep long-time errors\sep
time-reversibility\sep detailed balance \sep entropy production\sep information criteria
\end{keyword}

\maketitle

% Introduction
\section{Introduction}
Kinetic Monte Carlo, also known as the n-fold way~\cite{Bortz} or the Bortz-Kalos-Lebowitz
algorithm~\cite{Kalos}, is a common tool among practitioners interested in simulating
stochastic processes arising from chemical, biological, or agent-based
models on lattices~\cite{Landau}. However, even sophisticated algorithms inevitably
experience slowdown as the size of the system increases. In fact, it may even be 
the case that the system size prohibits the use of a serial simulation, for
instance due to problems with storing the system in a single CPU's memory.   

There is a substantial amount of work in addressing 
the efficiency issues when simulating larger time and length scales by using
parallel algorithms for systems with either short-range~\cite{lubachevsky,
  ShimAmar, Martinez} or long-range interactions~\cite{van,endo}.
Typically, those algorithms are based on domain decomposition of the lattice
into sub-domains (see Figure~\ref{fig:PL_KMC}), and subsequently the simulation
on each sub-domain according to a chosen computational schedule.  \ouredit{One such
algorithm is part of Sandia Labs' SPPARKS Monte Carlo code~\cite{spparks}.
A new insight from~\cite{Arampatzis:2012} was that such  parallel KMC algorithms that
depend on short-range interactions can be formulated as operator
splitting schemes that approximate the exact process. This mathematical formulation  allows both for performance and numerical error analysis 
 of the schemes~\cite{Arampatzis:2014}. It was also
leveraged in previous work~\cite{RER-paper} where, combined with information
metrics, allowed us to study the long time error behavior of the schemes.} 

In fact, the investigation of long-time errors for operator splitting schemes is of prime
importance when using parallel KMC, as errors accumulate due to the domain
decomposition procedure. This accumulation can affect the simulation
dramatically at long times and make it \ouredit{uncertain  for practitioners to sample from the correct 
stationary regime. Unfortunately, classical numerical analysis fails to quantify
errors of splitting schemes, such as parallel KMC for long times, which in turn  motivated our use of the relative
entropy per unit time as a tool to study the performance of operator splitting
schemes~\cite{RER-paper}.}

Another aspect of long-time behavior, and the focus of this work, is on
systems with time-reversible dynamics. That symmetry is
often an integral part of the physical structure of the model, for example in the simulation of interacting diffusions or adsorption/desorption mechanisms. While in such cases
the time-reversal symmetry is preserved under the serial KMC simulation
(typically by enforcing the detailed balance condition),  the
time-discretization, domain decomposition, and breakdown of serial communication
of the parallelized algorithm may lead to loss of detailed balance, and thus of
reversibility. \secondRef{There exists some literature on constructing parallel
algorithms that preserve the detailed balance (DB)
condition~\cite{Nilmeier20142479}. In those algorithms, the scheme picks 
a schedule for sweeping over the lattice sub-domains, executes it by simulating
each sub-domain forward in time for a fixed number of time steps according to the schedule,
and then picks a new schedule. For the adjustment to the correct timescale, computation of an equilibrium autocorrelation function is also required. Although such schemes resemble the random Lie-Trotter splitting~\cite{Arampatzis:2014} and they can be numerically analyzed in a similar manner, we will not discuss them here, mainly due to the technical differences with schemes that employ a fixed computational schedule~\cite{Arampatzis:2012}. More specificially,} our focus is on general partially asynchronous parallel algorithms, like
the one in SPPARKS. For those, a user has to pick between
different domain decompositions, time steps of the scheme, and \ouredit{a fixed}
schedule. These choices will impact the loss of reversibility of the scheme.
Therefore, it makes sense to develop a theory that can connect the various  parameters of the scheme with loss of reversibility.
 
Regarding the loss of reversibility of numerical schemes,
in~\cite{irreversibility_KPR} the authors used the entropy production rate
(EPR) as an information metric to quantify the loss of reversibility for the
Euler-Maruyama and Milstein schemes for stochastic differential equations, as
well as BBK schemes for Langevin dynamics. This idea was motivated by
concepts in non-equilibrium statistical mechanics, originally developed to understand the long-time dynamics and the fluctuations in
non-equilibrium steady states~\cite{maes1999fluctuation,Maes-Redig, Lebowitch,kurchan,gallavoti-ensembles}. The
authors in~\cite{irreversibility_KPR} used such non-equilibrium statistical
mechanics methods as computable numerical tools to assess the loss of
reversibility of numerical schemes for SDEs. More specifically, they computed 
the EPR with the Gallavotti-Cohen action functional~\cite{Lebowitch} as an 
estimator for different numerical schemes. It was demonstrated that the scheme
performance in controlling the loss of reversibility can vary greatly. In
particular, the Euler-Maruyama scheme for SDEs with multiplicative noise can break
reversibility in an unrecoverable manner regardless of the size of the time
step~\cite[Theorem 3.7]{irreversibility_KPR}.

Our goal here is to apply a similar perspective for the study of
splitting schemes in parallel KMC. However, in contrast with schemes for SDEs, for the class of
systems that we can simulate in this manner the transition probabilities are
either difficult to compute or not available at all. Because of this, a new
approach is required, which is why we write the EPR as an asymptotic expansion
in the scheme's time step by using the semigroup theory for Markov chains. We
demonstrate that the coefficients of the expansion of the EPR depend on the
transition rates of the model and can be estimated as ergodic averages by
samples from the parallel algorithm. We also show that the required computations for the estimation of the coefficients scale with the
size of the boundary between sub-domains on the lattice in a manner that depends
on the scheme selected. Therefore, by appropriate normalization, we can
calculate the entropy production rate per lattice site, i.e.\ independent of
system size. \ouredit{As a result, we obtain an {\em a posteriori} expansion for
the estimator of the EPR, which  can be used  as a diagnostic tool that can be calculated on a system
of smaller size than the targeted one, and/or even ran with a simple serial implementation of the
parallel algorithm.} 

This information-theoretical perspective is
similar to the use of information metrics to assess discrepancy of models,
algorithms, approximations, etc., to that applied to the study of 
long time errors for Parallel KMC~\cite{RER-paper}, sensitivity
analysis~\cite{sensitivity_PK}, and in studying loss of information due to
coarse-graining in non-equilibrium systems~\cite{Eva-coarse-grain}.  

The manuscript is organized as follows. In Section~\ref{sec:PL-KMC}, we
provide an introduction to Parallel Lattice KMC and the ideas behind its error
analysis based on operator splitting. Section~\ref{sec:entropy-production} is especially important, as we introduce the entropy
production and entropy production per unit time. Those concepts will be the 
information-theoretical quantities used to study loss of
reversibility for operator splitting schemes. Then, in Section~\ref{sec:spparks}, we
discuss the estimation of the EPR, referring to specific examples and an
implementation in SPPARKS. We use the estimates to 
compare two  splitting schemes as well as discuss their loss of
reversibility with respect to lattice decomposition and time step. Finally, in
Section~\ref{sec:general}, we provide general results for the asymptotic
behavior of EPR, and deduce from them estimators.  

%%% Local Variables: 
%%% mode: latex
%%% TeX-master: "../main.tex"
%%% End: 

% Parallel KMC
% Background of Parallel KMC
\section{Background on Parallel Lattice KMC}
\label{sec:PL-KMC}

Parallel Lattice KMC is an approximation to the exact, but serial, simulation
algorithm. In implementations, it works by taking advantage of the spatial
dependencies between the different events. For example, in a model with finite 
range interactions, the spins on two lattice sites can change with no error to
the dynamics as long as the two are sufficiently far apart.
Therefore, by decomposing the lattice into sub-lattices, we gain an efficient
alternative to serial KMC analogous to domain decomposition methods in
parallel algorithms for partial differential equations. 

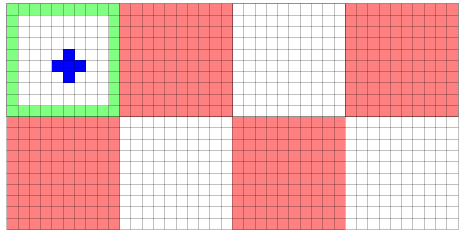
\begin{figure}[h]
				\centering
				% Lattice decomposition picture

					\begin{tikzpicture}[scale=0.15]
									%	\draw (0,0) rectangle (0,40);
										
										\fill [fill=green!50] (0,40) rectangle (10,30);
										\fill [fill=white] (1,39) rectangle (9,31);
										
										% Checkerboard it
										\foreach \x in {0,1}{
											\foreach \y in {2}{
												\fill [fill=red!50] (\x*10+10*\x,\y*10) rectangle (10*\x+10+10*\x,10*\y+10);
											}
										}
										
										\foreach \x in {0,1}{
											\foreach \y in {3}{
												\fill [red!50] (\x*10+10*\x+10,\y*10) rectangle (10*\x+10+10*\x+10,10*\y+10);
										   }
										}
										
										% A box enclosing a nearest neighborhood for comparison. 
										\def\color{blue!}
										\fill [\color!40,ultra thick,fill=\color] (4,35) rectangle (6,34);
										\fill [\color!40,ultra thick,fill=\color] (5,34) rectangle (6,33);
										\fill [\color!40,ultra thick,fill=\color] (6,33) rectangle (5,36);
										\fill [\color!40,ultra thick,fill=\color] (7,35) rectangle (6,34);
										\fill [black ,ultra thick, step=1] (0,20) grid (40,40);
										\fill [black, ultra thick,step=10] (0,30) grid (40,40);
										\end{tikzpicture}						
        \caption{Checkerboard decomposition of a rectangular lattice
                      into sub-lattices. Because each site's transition depends
                      on the information from the {\color{ blue} nearest
                        neighbors}, transitions in sub-lattices of the same
                      color are independent. White sub-lattices can be simulated
                      asynchronously in time, while keeping the states in the
                      red ones \textbf{frozen}. When the stochastic time reaches
                      $\Dt$, information is shared with the red sub-lattices
                      about the state of the {\color{green!80} boundary regions} (here
                      only shown for the first sub-lattice).}
			  \label{fig:PL_KMC}
			 \end{figure}

       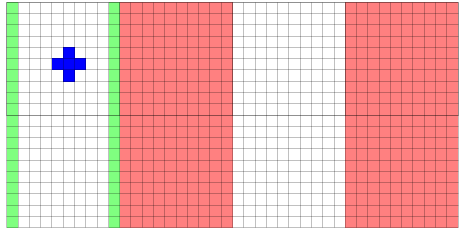
\begin{figure}[h]
         \centering
         % stripes decomposition

         	\begin{tikzpicture}[scale=0.15]
									%	\draw (0,0) rectangle (0,40);

										\fill [fill=green!50] (0,40) rectangle (10,20);
										\fill [fill=white] (1,49) rectangle (9,11);
										
                    %Checkerboard it
										\foreach \x in {0,1}{
											\foreach \y in {2}{
												\fill [fill=red!50] (\x*20+10,\y*10) rectangle (20*\x+20,10*\y+10);
											}
										}
										
										\foreach \x in {0,1}{
											\foreach \y in {3}{
												\fill [red!50] (\x*20+10,\y*10) rectangle (10*\x+10+10*\x+10,10*\y+10);
										   }
										}
										
										% A box enclosing a nearest neighborhood for comparison. 
										\def\color{blue!}
										\fill [\color!40,ultra thick,fill=\color] (4,35) rectangle (6,34);
										\fill [\color!40,ultra thick,fill=\color] (5,34) rectangle (6,33);
										\fill [\color!40,ultra thick,fill=\color] (6,33) rectangle (5,36);
										\fill [\color!40,ultra thick,fill=\color] (7,35) rectangle (6,34);
										\fill [black ,ultra thick, step=1] (0,20) grid (40,40);
										\fill [black, ultra thick,step=10] (0,30) grid (40,40);
										\end{tikzpicture}
         \caption{Stripe decomposition of a rectangular lattice into
           sub-lattices. Compared to Figure~\ref{fig:PL_KMC}, now each processor
         needs to store more information before the runs can take place.}
         \label{fig:stripes}
       \end{figure}

A new insight provided  in~\cite{Arampatzis:2012} was that parallel
algorithms, such as the one described in Figure~\ref{fig:PL_KMC}, can be
formulated as operator splitting schemes. This connection allows for the design,
error quantification, and performance analysis of such
algorithms~\cite{Arampatzis:2014}. \ouredit{Specifically, this approach allows for an obser\-vable-focused error
analysis, through which a practitioner can pick both the
scheme class and specific parameters that fit the computational needs.
Additionally, it formalizes the dependence of the error on the
decomposition of the lattice and on the splitting time step, $\Dt$, for bounded
time intervals. Finally, it also allows to study the long-time behavior of
the schemes and provides long-time error control in the recent work~\cite{RER-paper}.} 

To begin, we pick a \ouredit{positive \textit{operator splitting time step} $\Dt$}. If
we were to simulate a Continuous Time Markov Chain (CTMC) via the serial KMC
algorithm, then the corresponding transition probability of the process jumping
from a state $\si$ to a state $\si'$, $\si,\si'\in S$, in time $t$ would be 

\begin{align}
P_t(\si,\si')=P(\si_t=\si'|\si_0=\si)=e^{tL}\de_{\si'}(\si).
\label{eq:exact_prob}
\end{align}
In~\eqref{eq:exact_prob}, $\delta_{\si'}$ is a Dirac probability measure, centered at state $\si'$, and $L$ is the generator of the process which, for bounded and continuous functions $f$, is defined as
\begin{align}
L[f](\si):=\sum_{\si'\in S}q(\si,\si')(f(\si')-f(\si)).
\label{eq:generator}
\end{align}
\secondRef{The transition rates of the CTMC will be denoted by $q(\cdot,\cdot)$. In general, they are tied to the system being modelled and are assumed to be known, see~\ref{app:Adsorption/Desorption} and~\ref{app:diffusion}. }

Since the approximate process will be a discretization with $\Dt$ step size, we
will be comparing it against the $\Dt$-skeleton of the exact Continuous Time
Markov Chain, with transition
probability $\Po(\si,\si')=e^{\Dt L}\de_{\si'}(\si)$. \ouredit{This is only done to simplify the comparison and corresponds to sub-sampling the exact KMC, keeping only the states every $\Dt$ apart.} Now, inspired by the
Trotter product formula~\cite{Trotter:1959}, one can write approximations to
$e^{\Dt L}$ by splitting the operator $L$ into $L_1+L_2$ (with associated rates
$q_1,q_2$). For example, two popular approximations are:   
\begin{align}
e^{\Dt L}\simeq& e^{\Dt L_1}e^{\Dt L_2}, &\text{(Lie)}\label{eq:lie}\\
e^{\Dt L}\simeq& e^{\Dt/2 L_1}e^{\Dt L_2}e^{\Dt/2 L_1}. &\text{(Strang)}\label{eq:strang}
\end{align}
Throughout this work, we shall be using $\Pb$ to denote the transition
probability arising from approximations to $e^{\Dt L}$. We will also
use $\mb$ to denote the corresponding stationary measure. 

\ouredit{Although we consider a splitting into two operators, $L_1,L_2$, this is for the convenience of the reader. Occasionally, it is beneficial to split the generator $L$ into more than two parts, as is done in Sandia Labb's SPPARKS code~\cite{spparks}, where a $2D$ simulation decomposes the lattice into four pieces instead of two.} However, the error analysis extends naturally
to this case. 

\subsection{Local Error Analysis}
Operator splitting approximations are equivalent to specific computational
schedules for Parallel Lattice KMC schemes~\cite{Arampatzis:2012}. For example, if we alternate between the red
and white groups in Figure~\ref{fig:PL_KMC}, allowing each group to run only for
$\Dt$, then that is equivalent to using the Lie splitting (Equation~\eqref{eq:lie}) to approximate
$e^{\Dt L}$. \ouredit{ If $L$ is a bounded operator, then we can write the semigroup as a series expansion,} 

\begin{align}
\label{eq:exact-expansion}
e^{\Dt L}=\sum_{k=0}^{\infty}\frac{\Dt^k}{k!}L^k,
\end{align}
\ouredit{
where $L^k$ stands for the resulting operator after $k$ compositions of $L$. We
can also write a representation for the various operator splitting schemes. For
example, for the case of the Lie splitting in~\eqref{eq:lie} and by using the
expansion in~\eqref{eq:exact-expansion}, }
\begin{equation}
\label{eq:approx-expansion-Lie}
\begin{aligned}
e^{\Dt L_1}e^{\Dt L_2}&=\left(I+\Dt L_1+O(\Dt^3)\right)\cdot\left(I+\Dt L_2+O(\Dt^3)\right)\\
&= I+\Dt L+\frac{\Dt^2}{2}\left(L_1^2+L_2^2+2L_1L_2\right)+O(\Dt^3)
\end{aligned}
\end{equation}
\ouredit{
Then, the representations in~\eqref{eq:exact-expansion}
and~\eqref{eq:approx-expansion-Lie} allow us to study the local error between
$\Po$ and $\Pb$:
}
\begin{align}
  \label{eq:error-pow-series}
  \Po(\si,\si')-\Pb(\si,\si)=\frac{\Dt^2}{2}[L_1,L_2]\des+O(\Dt^3), 
\end{align}
\ouredit{
where $[L_1,L_2]$ is the \textit{Lie bracket} of $L_1$, $L_2$, and is equal
to $L_1L_2-L_2L_1$. Similarly, the order of the local error $p$ is equal to $2$. Note that $L_1,L_2$ can be expressed in terms of the transition rates, which implies that $[L_1,L_2]$ is computable for any pair of states $(\si,\si')$. A generalization of this idea is in Lemma~\ref{lem:p_and_C}.}

\begin{lemma}[Commutator and Order of Local Error]
\label{lem:p_and_C}
Let $\si,\si'$ be states, $\Po$ as in Equation~\eqref{eq:exact_prob} and $\Pb$:
approximation of $\Po$ via a splitting scheme. Then, there is a function
$C:S\times S\to \mathbb{R}$ and an integer $p,\ p>1$, such that

\begin{align}
\Po(\si,\si')-\Pb(\si,\si')=C(\si,\si')\Dt^p+O(\Dt^{p+1}).
\label{eq:comm}
\end{align}
$C$ will be called the \textbf{commutator} and $p$ is the \textbf{order of the local error}. 
\end{lemma}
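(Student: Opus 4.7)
The plan is to follow the template already set out for the Lie splitting in~\eqref{eq:approx-expansion-Lie}: expand both $\Po$ and $\Pb$ as formal power series in $\Dt$, subtract, and identify the leading non-vanishing term. Since $L$ is a bounded operator (which holds whenever the rates $q(\si,\si')$ are bounded, e.g., on a finite state space $S$), the exponential $e^{\Dt L}$ is given by the convergent series~\eqref{eq:exact-expansion}, and each factor $e^{a\Dt L_i}$ appearing in a splitting scheme admits the analogous convergent expansion.

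First I would write $\Pb$ as a finite product of exponentials of $L_1$ and $L_2$ (with various scalar multiples of $\Dt$ in front), expand each factor as its Taylor series, and multiply out. Collecting terms of equal total degree in $\Dt$ yields
\begin{equation*}
\Pb = \sum_{k=0}^{\infty} \Dt^k A_k,
\end{equation*}
with $A_0 = I$ and, for any consistent splitting, $A_1 = L$. Comparing with the coefficients $L^k/k!$ of~\eqref{eq:exact-expansion}, I would define $p$ as the smallest integer with $A_p \neq L^p/p!$, which is at least $2$ by consistency, and set
\begin{equation*}
C(\si,\si') = \bigl(L^p/p! - A_p\bigr)\des.
\end{equation*}
By the Baker--Campbell--Hausdorff identity the operator $L^p/p! - A_p$ is a linear combination of iterated commutators of $L_1$ and $L_2$; for the Lie case one recovers $p=2$ and $C(\si,\si') = \tfrac{1}{2}[L_1,L_2]\des$, recovering~\eqref{eq:error-pow-series}, and for Strang a similar calculation gives $p=3$.

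Next I would control the remainder. Boundedness of $L$ gives $\bigl\|\sum_{k \ge p+1} \Dt^k L^k/k!\bigr\| \le K \Dt^{p+1}$ on any bounded interval of $\Dt$, and the analogous bound holds for the tail of $\Pb$ since it is a finite product of convergent series. Evaluating the difference of the two series on $\de_{\si'}$ at $\si$ then gives
\begin{equation*}
\Po(\si,\si') - \Pb(\si,\si') = C(\si,\si')\Dt^p + O(\Dt^{p+1}),
\end{equation*}
which is the claim.

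The main obstacle is bookkeeping rather than genuine mathematical content: one has to verify that $A_p$ is computed correctly for a general splitting (not only Lie or Strang) and that the consistency assumption $A_1 = L$ holds, since this is what forces $p > 1$. For a uniform-in-$(\si,\si')$ remainder one needs uniform boundedness of the rates so that $L$ is a bounded operator on $\ell^\infty(S)$; on a finite lattice this holds automatically, and for the adsorption/desorption and diffusion models treated later in the manuscript it follows from the standard hypotheses on $q(\si,\si')$ recalled in the appendices.
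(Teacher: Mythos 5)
Your proposal is correct and follows the same route as the paper, whose own proof is a one-line appeal to the power series representations of $\Po$ and $\Pb$ under the boundedness of $L$; you simply flesh out the bookkeeping (defining $p$ as the first index where the coefficients differ, noting consistency forces $p>1$, and bounding the tails). The added detail about BCH and the uniform remainder is consistent with how the paper treats the Lie and Strang cases in~\eqref{eq:approx-expansion-Lie} and~\ref{sec:suppl}.
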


\begin{proof}
Equation~\eqref{eq:comm} can be derived from the power series representations of
$\Po, \Pb$, as $L$ is a bounded operator.
\end{proof}

\ouredit{
In the context of Parallel KMC, the commutator term $C=C(\sigma, \sigma')$
captures the error due to mismatches on the boundary regions between the
different sub-lattices~\cite{Arampatzis:2014}.
}

The operating assumption in this work is that all operators are bounded.
This allows us to represent the transition probabilities with power series and,
subsequently, to calculate the form of the commutators and of other quantities
of interest (see discussion in~\ref{sec:suppl}). However, the present work could also be extended to the case of
unbounded operators~\cite{hansen,Janhke}, where alternative representations for the semigroups could
be used for the error analysis. We are not handling
such cases here, as the Markov generators of stochastic particle systems are
bounded operators~\cite{Arampatzis:2014}.

%%% Local Variables: 
%%% mode: latex
%%% TeX-master: "../main.tex"
%%% End:

% Entropy Production Rate
\section{Entropy Production Rate: an information criterion for reversibility}
\label{sec:entropy-production}
\ouredit{Let us consider  a discrete stochastic process $X_n$, $n\in\mathbb{N}$. Then, $X_n$
is time-reversible if, for any $m\in\mathbb{N}$}
\begin{align}
  \label{eq:time-reversibility}
   p(\si_0,\ldots,\si_m)=p(\si_m,\ldots,\si_0), 
\end{align}
where $p(\si_0,\ldots,\si_m)=p(X_0=\si_0,\ldots,X_m=\si_0)$, $\si_i$ being
states of the process.
For stationary Markov processes, the
detailed balance condition (DB) is equivalent to
time-reversibility~\cite[Theorem 1.2]{kelly}. If $X_n$ has transition probability
$P$ and stationary distribution $\mu$,
then DB requires that for all states $\si,\si'\in S$,
\begin{align}
\mu(\si)P(\si,\si')=\mu(\si')P(\si',\si).
\label{eq:DetailedBalance}
\end{align}

Although the DB condition~\eqref{eq:DetailedBalance} is a useful analytical 
tool for the construction of Markov Chains 
with a specific stationary distribution, we cannot apply it to quantify the loss of
reversibility for the systems we are interested in. In our context, $P$
corresponds to the transition probability, $\Pb$, of the scheme, which we do not know explicitly, and
$\mu=\mb$ would be the stationary distribution associated with the scheme, which we
can only access through sampling. In addition, \secondRef{due to the time-discretization, domain decomposition, and asynchronous simulation associated with the operator splitting scheme, we do not expect it to exactly
satisfy condition~\eqref{eq:DetailedBalance}}. \ouredit{Consider for example the case numerical schemes for SDEs~\cite{irreversibility_KPR}, where the approximation can completely break down reversibility. In view of this}, we
wish to quantify the loss of reversibility and connect it to the
parameters of the scheme (lattice decomposition, computation schedule, time step
$\Dt$, etc.). Therefore, we need to look for alternative ways to assess the
loss of reversibility of the scheme. 

Returning to the definition of time-reversibility
in~\eqref{eq:time-reversibility} with respect to paths, we introduce an object
from information theory, the entropy production (EP) associated with $P$: 
 \begin{align}
  \label{eq:entropy-production}
  \mathrm{EP}(P)=\sum_{\si_0,\ldots,\si_m}p(\si_0,\ldots,\si_m)\log\left (\frac{p(\si_0,\ldots,\si_m)}{p(\si_m,\ldots,\si_0)} \right),
\end{align}
with the sum in Equation~\eqref{eq:entropy-production} being over $S^m$, 
$S$ is the state space. 

The EP is an example of a more general measure of similarity between
distributions known as the relative entropy (RE), or Kullback-Leibler
divergence~\cite{Cover-Thomas}. Given two probability distributions, $p_1$,
$p_2$, where $p_1$ is absolutely continuous with respect to $p_2$, then the RE of $p_1$ with
respect to $p_2$ is defined as
\begin{align}
  \label{eq:relative-entropy}
   R(p_1\|p_2):=\int \log\frac{dp_1}{dp_2}dp_1. 
\end{align}
The definition in~\eqref{eq:relative-entropy} enjoys the properties of a divergence: 
\begin{enumerate*}
\item $R(p_1\|p_2)\geq 0$ (Gibbs' inequality),
\item $R(p_1\|p_2)=0$ if and only if $p_1=p_2,\ p_1-\mathrm{a.e.}$\label{item:RE-zero} 
\end{enumerate*}
However, RE is not a metric in the strict  sense, as it does not satisfy the triangle inequality and
is not symmetric in its arguments.

From the second property of a divergence and~\eqref{eq:entropy-production}, we can readily see that 
\begin{align}
  \label{eq:epr-time-reversiblity}
   \mathrm{EP}(P)=0 \Leftrightarrow p(\si_0,\ldots,\si_m)=p(\si_m,\ldots,\si_0). 
\end{align}
Therefore, if Equation~\eqref{eq:epr-time-reversiblity} holds for all $m$, then
that implies time-reversibility. It is because of
this property of the EP that we will use it as a means to assess and quantify how much a scheme $\Pb$
destroys reversibility. This idea was originally motivated by
tools in non-equilibrium statistical mechanics to understand long-time dynamics and fluctuations in
associated non-equilibrium steady states~\cite{maes1999fluctuation,Maes-Redig,
  Lebowitch,kurchan,gallavoti-ensembles}. 

Calculating the EP, even for moderate $m$, can be computationally
intensive. \secondRef{ From the definition in~\eqref{eq:entropy-production} we can derive an
entropy rate that is independent of the path length when the initial sampling distribution is the stationary. By the Markov
property, we can write the forward and backward path distributions as}
\begin{equation}
  \label{eq:path-measure}
  \begin{aligned}
   p(\si_0,\ldots,\si_m)&=&\mu(\si_0)P(\si_0,\si_1)\cdots P(\si_{m-1},\si_m),\\ 
   p(\si_m,\ldots,\si_0)&=&\mu(\si_m)P(\si_m,\si_{m-1})\cdots P(\si_1,\si_0),
  \end{aligned}
\end{equation}
where $\mu$ is the corresponding stationary distribution.
Then, using~\eqref{eq:path-measure} in Equation~\eqref{eq:entropy-production} and
carrying out the calculations leads to
\begin{align}
  \label{eq:ep-epr-relation}
   \mathrm{EP}(P)=m\cdot\sum_{\si_0,\si_1}\mu(\si_0)P(\si_0,\si_1)\log\left(\frac{P(\si_0,\si_1)}{P(\si_1,\si_0)}\right) = m\cdot \EPR(P).
\end{align}
\secondRef{A formal statement and proof of~\eqref{eq:ep-epr-relation} can be found in Lemma~\ref{app:lem:ep-epr},~\ref{app:ep-epr-connection}.} \ouredit{The entropy production rate (EPR)} is defined for discrete time Markov
processes as 
\begin{align}
\EPR(P):=\sum_{\si,\si'}\mu(\sigma)P(\si,\si')\log\left(\frac{P(\si,\si')}{P(\si',\si)}\right).
\label{eq:EPR}
\end{align}
A more general definition, applicable to continuous-time Markov processes, can also
be given, see~\cite{irreversibility_KPR} for an application in quantifying the
loss of reversibility for numerical schemes for SDEs.

\ouredit{We will use the EPR to quantify the loss of reversibility of the schemes studied. Given $P$, we can estimate the EPR in~\eqref{eq:EPR}
by the Gallavotti-Cohen functional (as done in~\cite{irreversibility_KPR}):}
 \begin{align}
  \label{eq:GC-functional}
  \EPR(P)=\lim_{N\to\infty}\frac{1}{N}\sum_{i=0}^{N}\log\left( \frac{P(\si_i,\si_{i+1})}{P(\si_{i+1},\si_{i})} \right),
\end{align}
\secondRef{where $(\si_{i},\si_{i+1})$ are sampled according to $\mu(\si)P(\si,\si')$.}  The quantity on the right hand side of Equation~\eqref{eq:GC-functional} is thus,
under suitable ergodic assumptions, an unbiased statistical estimator of the EPR, \secondRef{following the law of large numbers for Markov chains}.  
\ouredit{For a given scheme $\Pb$ with
stationary distribution  $\mb$, Equation~\eqref{eq:EPR} thus becomes:} 
\begin{align}
\EPR(\Pb):=\frac{1}{\Delta t} \sum_{\si,\si'}\mb(\sigma)\Pb(\si,\si')\log\left(\frac{\Pb(\si,\si')}{\Pb(\si',\si)}\right).
\label{eq:EPR-Pb}
\end{align}

\begin{remark}
  \label{rem:epr-normalization}
 \ouredit{ In Equation~\eqref{eq:EPR-Pb}, we normalize with  the time step $\Delta t$ since the EPR is a quantity defined as "per unit time" (see also Equation~\eqref{eq:ep-epr-relation}).
 This normalization is also practically important, as we wish to consider comparisons of EPRs for different time-steps $\Dt$. 
 Finally, the same normalization was considered for the RER
 in previous work~\cite[Remark 4.2]{RER-paper} and Equation~\eqref{eq:rer}.
}
\end{remark}
\ouredit{
The EP can also be seen as an information criterion for operator
splitting schemes. Consider two schemes, $\Pb^1,\Pb^2$ that approximate the same exact $\Po$. Then we can use EP to quantify which of the two retains more reversibility per time step.
That is, we are also interested in making statements of the form}
\begin{align}
  \label{eq:ep-info-crit}
   \EP(\Pb^1)\leq \EP(\Pb^2).  
\end{align}
\ouredit{$EP(\Pb^1)-EP(\Pb^2)$ is an information criterion that
takes into account loss of reversibility, similarly to how AIC and BIC are used to
assess the quality of models in statistics~\cite{Akaike1,Bayes1}.  As the EP is a difficult quantity to compute, we can employ the EPR and Equation~\eqref{eq:ep-epr-relation}, and thus have another 
way to distinguish possible schemes based on their performance in controlling
the loss of reversibility. In analogy with Inequality~\eqref{eq:ep-info-crit},
we are interested in the difference} 
\begin{align}
  \label{eq:epr-info-crit}
   \EPR(\Pb^1)-\EPR(\Pb^2).
\end{align}

\begin{figure}[h]
\label{fig:estimate-points}
\centering
\includegraphics[width=0.7\textwidth]{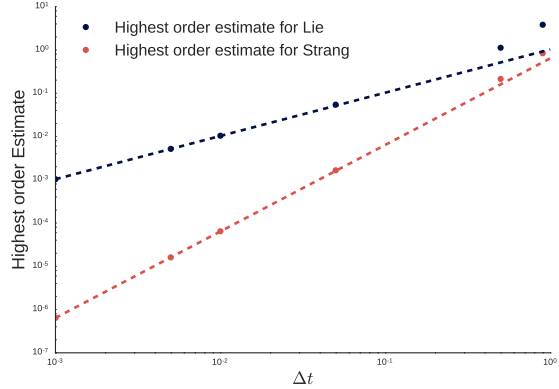}
\caption{\ouredit{Approximations to the EPR of the form $(A+D)\cdot \Dt^{p-1}$ in the case of a block decomposition of the lattice. The points depend on $\Dt$ as we are sampling from different $\mb$ to estimate the coefficients,  whereas the two dashed curves correspond to $(A+D)\Dt^{p-1}$ with $A+D$ only estimated at the smallest $\Dt=0.001$. We use such curves for all comparisons, as we are focused on the case of small $\Dt$. For $\Dt$ close to one, the points depart from the line as $\mb$ gets further from the exact $\mo$. The simulated model is an adsorption/desorption system, see~\ref{app:Adsorption/Desorption}. The formulas for the coefficients $A$ and $D$ are given in~\ref{sec:suppl}. The situation is similar for the decomposition into stripes.}}
\end{figure}

\begin{figure}[h]
	\centering
	  \includegraphics[width=0.7\textwidth]{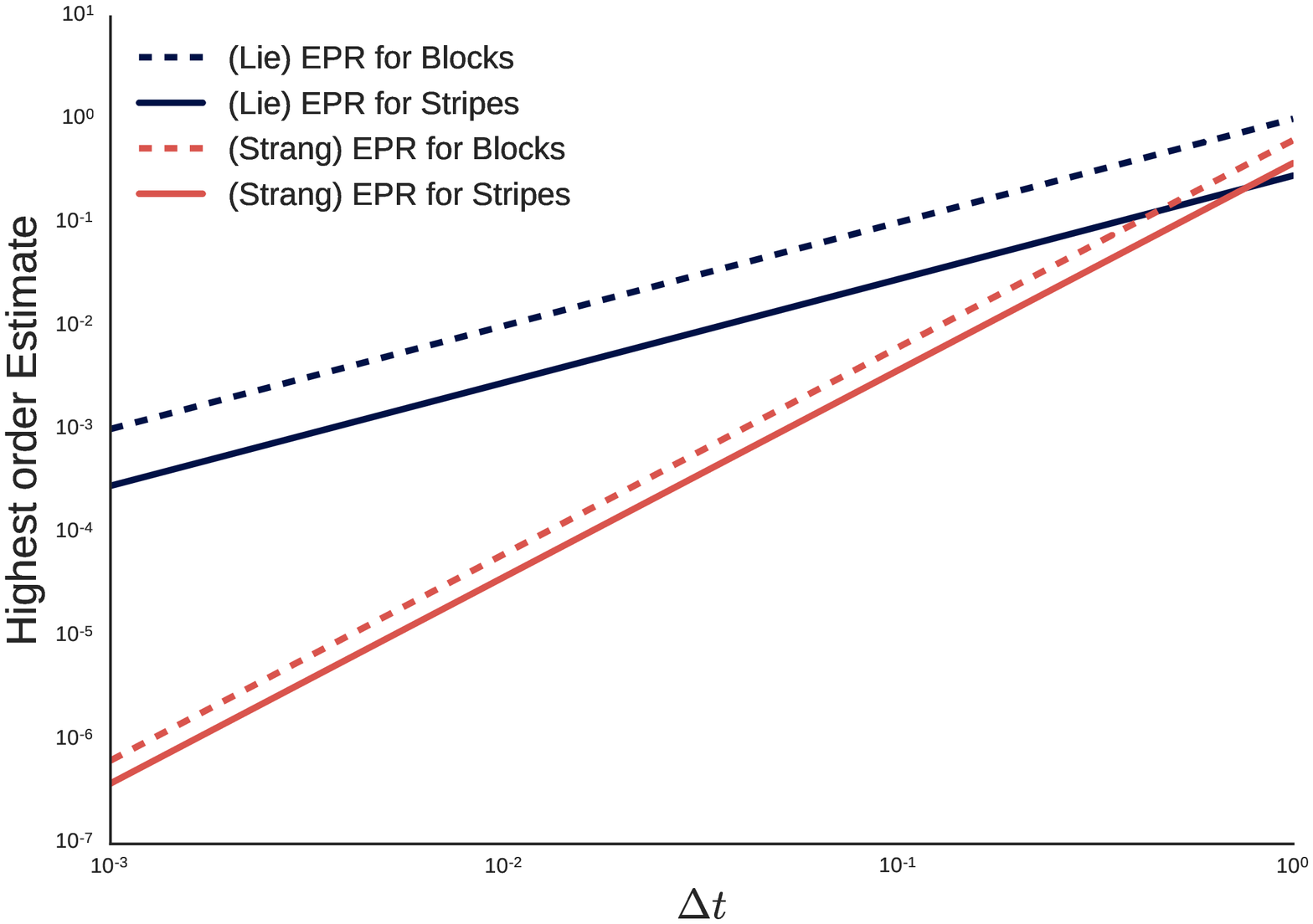}
	\caption{Approximations to the EPR of the form $(A+D)\cdot \Dt^{p-1}$. The Strang scheme retains more reversibility per time step and
    is more ``stable'' (with respect to the entropy production rate) under
    changes in the decomposition. Also, note that the estimate is normalized by $\Dt$ as
  per Remark~\ref{rem:epr-normalization}. \ouredit{The example is an adsorption/desorption system, see~\ref{app:Adsorption/Desorption} for details on the system and~\ref{sec:suppl} for the estimator formulas.}}
\label{fig:epr-comparison}
\end{figure}

\ouredit{Even though we have an abstract representation of $\Pb$
(see Equations~\eqref{eq:lie} and~\eqref{eq:strang}), we cannot calculate $\Pb$
directly. What we do know explicitly are the transition rates of the process.
We can leverage this information to construct a series expansion of $\Pb$ around $\Dt$ where each term depends on the transition rates. Through this, we can build statistical estimators of the highest order terms in an expansion of the EPR. Details about the coefficients and their statistical estimation are in Sections~\ref{sec:spparks},~\ref{sec:general}. In Figure~\ref{fig:epr-comparison} we demonstrate a comparison of two different parallel KMC schemes, based on these computable {\em a posteriori} expansions  of EPR.}

%%% Local Variables: 
%%% mode: latex
%%% TeX-master: "../main.tex"
%%% End: 

% Application
\section{Loss of reversibility in Parallel KMC}
\label{sec:spparks}

In this section, we will demonstrate how to use the EPR to quantify and control
the loss of reversibility for parallel Kinetic Monte Carlo (P-KMC). We will
also mention details about the implementation of the various observables that
are needed in order to estimate EPR. 

As mentioned before, for stochastic particle dynamics we cannot directly apply
the definition in Equation~\eqref{eq:EPR-Pb}, as we do not have the transition
probabilities $\Pb$ explicitly. Instead, we will use asymptotic results to
approximate the EPR for a small splitting time step, $\Dt$ (see
Section~\ref{sec:general} for derivations). 
We first write the $\EPR$ as per Theorem~\ref{th:ep_deco},
Section~\ref{sec:general}, \ouredit{but taking also into consideration Remark~\ref{rem:epr-normalization} for the required $\Dt$ normalization.} That is,
\begin{align}
\He(\Pb)=H(\Pb|\Po)+I(\Pb|\Po),
\label{eq:ep_deco}
\end{align} 
where $H$ represents the relative entropy rate (RER)
\begin{align}
H(\Pb|\Po):=\ouredit{\frac{1}{\Dt}}\sum_{\si,\si'}\mb(\si)\Pb(\si,\si')\log\left(\frac{\Pb(\si,\si')}{\Po(\si,\si')}\right)\label{eq:rer}
\end{align}
and $I$ is a ``discrepancy'' term
(see Section~\ref{sec:general}) defined as
\begin{align}
  I(\Pb|\Po)&:=\ouredit{\frac{1}{\Dt}}\sum_{\si,\si'}\mb(\si)\Pb(\si,\si')\log\left(\frac{\Po(\si',\si)}{\Pb(\si',\si)}\right).
 \label{eq:I}
\end{align}

Before we move on to how results on the RER and $I$ combine to give an
asymptotic picture of the EPR, we shall first discuss what each of those
captures. The RER, or relative entropy per unit time, has been used in
previous work~\cite{RER-paper} as a means to quantify the long-time error of
operator splitting schemes in the context of parallel KMC. Because of this, the
RER can be used as an information criterion to compare such schemes, as it takes
into account details of the scheme such as the splitting time step, the domain
decomposition of the lattice, and the computational schedule used. The RER has the
properties of a divergence, i.e.\ non-negativity for any $\Pb,\Po$, and equality
with zero if and only if $\Pb=\Po$. The discrepancy term in Equation~\eqref{eq:I} is what
enforces the property of the EPR to be zero when $\Pb$ is time-reversible. As we
shall see in Section~\ref{sec:general}, $I$ is not a divergence.   

Now, by the individual results for the
asymptotic behavior of \ouredit{RER (see proof of Theorem 8.6 in~\cite{RER-paper}) and $I$ (see Equation~\eqref{eq:I_asympt}) for small $\Dt$, we have}
\begin{align}
H(\Pb|\Po)&=A\cdot \Dt^{p-1}+O(\Dt^{p}),\label{eq:rer_A}\\
I(\Pb|\Po)&=D\cdot \Dt^{p-1}+O(\Dt^{p}).\label{eq:I_D}
\end{align}
Therefore, from Equations~\eqref{eq:ep_deco},~\eqref{eq:rer_A}, and~\eqref{eq:I_D}, we get 
\begin{align} 
\EPR(\Pb)=(A+D)\Dt^{p-1}+O(\Dt^{p}).
\label{eq:asymp_EPR}
\end{align}
We remind here that $p$ stands for the order of the local error (see
Lemma~\ref{lem:p_and_C}). 

Coefficients $A$ and $D$ are expected values of specific observables with
respect to $\mb$ (see~\ref{app:Adsorption/Desorption} for the explicit formulas in the case of
an adsorption/desorption process and~\ref{app:diffusion} for the case of a
diffusion process). Therefore, under some ergodicity assumptions, they can be
estimated via simulation of the system by using the parallel algorithm.  \ouredit{In Figures~\ref{fig:epr-comparison} and~\ref{fig:rer-comparison-Lie}, we estimate the EPR by an estimation of the constants $A,D$ for small timestep $\Dt$.}

In previous work~\cite{RER-paper}, we expressed $A$ explicitly in terms of the commutator
$C$ and the transition rates of the original process. For example, given a
lattice $\Lambda$, for the Lie splitting and an adsorption/desorption example (see~\ref{app:Adsorption/Desorption}),
the highest order coefficient for the RER is:   
\begin{align}
  \label{eq:A-exact}
  A=A_{\Lie}&=\mathbb{E}_{\mu_{\Lie}}\left[\sum_{x,y\in \Lambda}C_{\Lie}(\si,\si^{x,y})F_\Lie(\si,\si^{x,y})\right]\\
        &=\sum_{\si}\mu_\Lie(\si)\sum_{x,y\in \Lambda}C_{\Lie}(\si,\si^{x,y})F_\Lie(\si,\si^{x,y}),
\end{align}
where $\mu_\Lie$ is the corresponding stationary distribution of the Lie scheme,
$C_\Lie=[L_1,L_2]$  and $F_\Lie$ depends only on the transition rates. \secondRef{If we consider a state $\si$ and a lattice site $x$, $\si^x$ corresponds to the resulting state after a spin-flip at that lattice site and $\si^{x,y}$ denotes successive spin-flips at $x$ and $y$.} Note that
$A_{\Lie}$ in Equation~\eqref{eq:A-exact} seemingly depends on all lattice
positions $x,y$. This is also the case for $D_{\Lie}$ and the corresponding coefficients
for the Strang splitting (see~\ref{sec:suppl}). However, an important property
of the commutator in Lemma~\ref{lem:p_and_C} can be used to simplify the
situation and is further explained in Remark~\ref{rem:scaling}.   

\begin{remark}
\label{rem:scaling} 
A key  result in~\cite{Arampatzis:2014} was that the commutator is non-zero
only for lattice sites on the boundary regions (see
Figure~\ref{fig:PL_KMC}). This has two major implications: 
\begin{enumerate}
 \item The sums over the lattice $\Lambda$ in the highest order coefficients,
   $A$ and $D$ (see Equation~\eqref{eq:rer_A} and~\eqref{eq:I_D}), are really sums over the boundary regions, \ouredit{as the commutators for Lie and Strang are non-zero only along the boundary~\cite[Lemma 5.15]{Arampatzis:2014}.}
 \item We can compute the scaling of the highest order coefficients, \ouredit{$A$ and $D$}, with the system size.
\end{enumerate}  
\end{remark}

\secondRef{Due to Remark~\ref{rem:scaling}, we can estimate the EPR in a manner that does not
depend on the system size by normalizing by the
appropriate scaling. For instance, for the adsorption/desorption system on an
$N\times N$ lattice, since the boundary scales as $O(N)$, and because the commutator is non-zero only at the boundaries between sub-lattices, that is, $C(\si,\si^{x,y})=0$ if $x,y$ are not in the boundaries of different sub-lattices, the coefficient $A_\Lie$ in~\eqref{eq:A-exact} scales
like $O(N)$ too. Specifically for the Lie splitting, the per-particle highest
order coefficient of the RER (appearing in Equation~\eqref{eq:rer_A} as ``$A$'')
would be $A/N$. We do this for all estimates in this work, i.e., they are per-lattice-size estimates.} \firstRef{Note that the linear scaling is a property of systems that change a single lattice site per jump, such as the adsorption/desorption example. Accordingly, other systems can have different scaling for the computation of the highest order coefficients,  see for example the diffusion system in~\ref{app:diffusion}}.

\subsection{Impact of lattice decomposition on reversibility retention}
One of the choices a practitioner has to make when using
parallel KMC is the decomposition of the lattice, for example
checkerboard versus stripes (see Figures~\ref{fig:PL_KMC},~\ref{fig:stripes}). Selecting the right
decomposition can affect the load-balancing of the algorithm as well as the
feasibility of the run. For instance, it may be that the size of the
lattice is large enough to prohibit even loading the whole system into the
memory of a processor. Then, splitting the lattice into blocks, as in
Figure~\ref{fig:PL_KMC}, can often bypass this issue, whereas splitting into stripes
may not be advantageous. \firstRef{In this section, we give an example of a comparison that can be accomplished by using the EPR and its estimates. Other important comparisons could concern more complicated decompositions of the lattice and how those impact the choice of $\Dt$ for a fixed EPR tolerance.}

However, the choice of decomposition also has an effect on the error the splitting method
generates per time step, both for bounded time intervals~\cite{Arampatzis:2014}
and for long simulations~\cite{RER-paper}. This error is controlled by the
commutator associated with the scheme, and the analysis
in~\cite{Arampatzis:2014} shows that a decomposition into stripes results to
reduced error due to the smaller size of the boundary region \firstRef{when compared to a block decomposition when blocks and stripes have  the same width, see Figures~\ref{fig:PL_KMC} and~\ref{fig:stripes}}. By approximating the EPR, we
can quantify the long-time effect \ouredit{that the change of decomposition has} to the reversibility that each
scheme retains per time step. To discuss those issues, we simulated an
adsorption/desorption process and used the samples to estimate the EPR. For
details about the setup of the example see~\ref{app:Adsorption/Desorption}, \ouredit{information about the estimators is in~\ref{sec:suppl}.}

In Figure~\ref{fig:epr-comparison} we can see how
\textit{sensitive} each scheme is to different decompositions of the lattice.
In both cases, the schemes have a smaller EPR estimate
when using a stripe versus a block decomposition \firstRef{(where the width of the blocks matches the width of the stripes, see Figures~\ref{fig:PL_KMC},~\ref{fig:stripes})}. In fact, the Strang scheme has
consistently better performance in controlling the loss of reversibility with
respect to $\Dt$.  

\begin{figure}[h]
  \centering
  \includegraphics[width=0.7\textwidth]{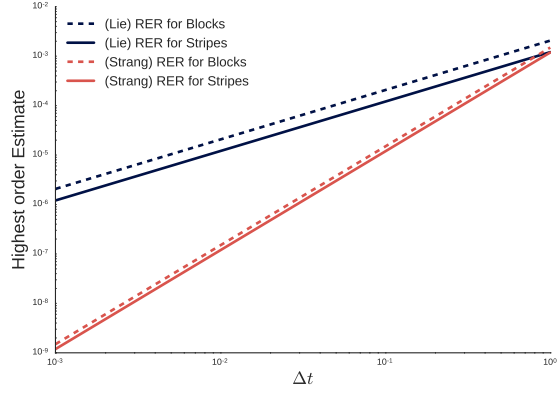} 
  \caption{Approximations to the RER of the form $A\cdot \Dt^{p-1}$ for the  same \ouredit{adsorption/desorption} system as with
    Figure~\ref{fig:epr-comparison} for the Lie splitting and Strang splitting. Lie appears to be sensitive to
    changes in the decomposition of the lattice.}
\label{fig:rer-comparison-Lie}
\end{figure}

%\begin{figure}[h]
%  \centering
%  \includegraphics[width=0.6\textwidth]{figures/single_proc_RER_Discr_Strang} 
%  \caption{Approximations to the RER and the disrepancy for the same system as
%    with Figure~\ref{fig:epr-comparison} for the Strang splitting (normalized with
%    respect to the system size). Both the discrepancy
%  and the RER are less affected by switching from blocks to stripes when using
%  the Strang splitting. Both the RER (Equation~\eqref{eq:rer}) and the
%    discrepancy (Equation~\eqref{eq:I}) are normalized by $\Dt$, see Remark~\ref{rem:epr-normalization}.}
%\label{fig:rer-comparison-Strang}
%\end{figure}

%%% Local Variables: 
%%% mode: latex
%%% TeX-master: "../main.tex"
%%% End: 

% General theory and derivations
\section{Derivations and General Theory}
\label{sec:general}

In this section, we present the general theory concerning the asymptotic
behavior of the entropy production rate (EPR) of a scheme $\Pb$. The arguments
presented here, although mirroring some of the ideas from our previous
work~\cite{RER-paper}, also take into account the additional discrepancy term,
$I(\Pb|\Po)$. Although we handle only the case that $L$ is split into $L_1+L_2$,
the arguments can also generalize to splittings with more components, e.g.
$L_1+L_2+L_3$. In fact, the arguments can readily generalize to schemes that
are not splittings, as long as there is an expression for the error like the one
in Lemma~\ref{lem:p_and_C}. Nevertheless, we will continue to consider splitting
methods in this section.  

\begin{remark}
\label{rem:assumption}
An implicit assumption in the parallel schemes used in Section~\ref{sec:spparks}
was that the splitting of the generator $L$ into $L_1+L_2$ was such that if
$q(\si,\si')=0$ for some pair of states $(\si,\si')$, then
$q_1(\si,\si')=q_2(\si,\si')=0$. This is imposed by the domain decomposition of
the lattice and we also assume this throughout for any splitting of $L$,
although the methodology can be extended to other splittings too.
\end{remark}

\subsection{Decomposition of the Entropy Production Rate}
To better understand the Entropy Production Rate, we shall first decompose it
into two pieces, the relative entropy rate, Equation~\eqref{eq:rer}, and a ``discrepancy'' term
(Equation~\eqref{eq:I}) that we will denote with $I$.

\begin{theorem}
\label{th:ep_deco}
Let $\Dt>0$ and $\Po$ be a transition probability, with stationary distribution $\mu$, that satisfies detailed balance. Then, if $\Pb$ is an approximation coming from a numerical scheme, we have that
\begin{align}
\He(\Pb)=H(\Pb|\Po)+I(\Pb|\Po).
\label{eq:th:ep_deco}
\end{align} 

\end{theorem}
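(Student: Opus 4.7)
The plan is to prove the identity by direct algebraic manipulation of the three sums, and then invoke the two structural hypotheses: detailed balance for $\Po$ (with respect to $\mu$) and stationarity of $\mb$ for $\Pb$.

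First I would add the definitions of $H(\Pb|\Po)$ and $I(\Pb|\Po)$ term by term. Since both sums range over the same pairs $(\si,\si')$ with the same weight $\mb(\si)\Pb(\si,\si')/\Dt$, the sum of the two logarithms collapses to a single logarithm of a product, giving
\begin{equation*}
H(\Pb|\Po)+I(\Pb|\Po) = \frac{1}{\Dt}\sum_{\si,\si'}\mb(\si)\Pb(\si,\si')\log\!\left(\frac{\Pb(\si,\si')}{\Pb(\si',\si)}\cdot\frac{\Po(\si',\si)}{\Po(\si,\si')}\right).
\end{equation*}
Comparing this with the definition of $\EPR(\Pb)$ in~\eqref{eq:EPR-Pb}, what remains is to show that the extra factor contributes nothing, i.e.
\begin{equation*}
\sum_{\si,\si'}\mb(\si)\Pb(\si,\si')\log\frac{\Po(\si',\si)}{\Po(\si,\si')}=0.
\end{equation*}

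Next I would use detailed balance for $\Po$: $\mu(\si)\Po(\si,\si')=\mu(\si')\Po(\si',\si)$, which rewrites the log ratio as $\log\mu(\si)-\log\mu(\si')$. The remaining double sum then splits into two single sums, and here I would invoke (i) the fact that $\sum_{\si'}\Pb(\si,\si')=1$ and (ii) the stationarity of $\mb$ under $\Pb$, namely $\sum_{\si}\mb(\si)\Pb(\si,\si')=\mb(\si')$. These two reductions yield $\sum_\si\mb(\si)\log\mu(\si)-\sum_{\si'}\mb(\si')\log\mu(\si')=0$, which closes the argument.

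A small technical caveat worth addressing is well-posedness of the logarithms. Under Remark~\ref{rem:assumption}, whenever $q(\si,\si')=0$ one has $q_i(\si,\si')=0$, so $\Pb(\si,\si')$ and $\Po(\si,\si')$ vanish on the same off-diagonal pairs, and the usual convention $0\log 0=0$ handles the diagonal and forbidden transitions consistently in all three sums. I would note this briefly before the manipulation so that the cancellation above is justified on the actual support of $\mb\otimes\Pb$.

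The main obstacle is conceptual rather than computational: one must be careful that the cancellation step uses the stationarity of $\mb$ (not of $\mu$, which is only stationary for $\Po$), since the weighting in all three quantities is $\mb(\si)\Pb(\si,\si')$. Once that is recognized, the proof reduces to the two-line manipulation sketched above.
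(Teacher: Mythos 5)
Your proof is correct and follows essentially the same route as the paper's: the paper inserts $\Po(\si,\si')\Po(\si',\si)$ into the logarithm of $\EPR(\Pb)$ and shows the middle term vanishes via detailed balance of $(\Po,\mu)$ together with $\sum_{\si'}\Pb(\si,\si')=1$ and stationarity of $\mb$, which is exactly your cancellation argument run in the opposite direction (from $H+I$ toward $\EPR$ rather than from $\EPR$ toward $H+I$). Your remark on well-posedness of the logarithms via Remark~\ref{rem:assumption} is a small addition the paper leaves implicit, but it does not change the substance.
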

\begin{proof}
\label{proof:ep_deco}
\secondRef{In Equation~\eqref{eq:EPR}, we defined} \textit{entropy production rate} corresponding to $\Pb$ as 
\begin{align}
\EPR(\Qdt)=\frac{1}{\Dt}\sum_{\si,\si'}\mb(\sigma)\Qdt(\si,\si')\log\left(\frac{\Qdt(\si,\si')}{\Qdt(\si',\si)}\right),
\label{eq:proof-epr}
\end{align}
We will first introduce the reversible $\Po$ in Equation~\eqref{eq:proof-epr} as 
\begin{align*}
\Dt\cdot \He(\Pb)&=\sum_{\si,\si',\si'\neq \si}\mb(\si)\Pb(\si,\si')\log\left(\frac{\Pb(\si,\si')\Po(\si,\si')\Po(\si',\si)}{\Po(\si,\si')\Po(\si',\si)\Pb(\si',\si)}\right).
\end{align*}
This allows us to split the logarithm into three pieces. 
\begin{equation}
\begin{aligned}
\Dt\cdot \He(\Pb)=&\sum_{\si,\si'}\mb(\si)\Pb(\si,\si')\log\left(\frac{\Pb(\si,\si')}{\Po(\si,\si')}\right)\\
&+\sum_{\si,\si'}\mb(\si)\Pb(\si,\si')\log\left(\frac{\Po(\si,\si')}{\Po(\si',\si)}\right)\\
&+ \sum_{\si,\si'}\mb(\si)\Pb(\si,\si')\log\left(\frac{\Po(\si',\si)}{\Pb(\si',\si)}\right).
\end{aligned}
\label{eq:splitH}
\end{equation}
We shall now show that the middle sum is equal to zero. By our assumptions, we know that the pair $(\Po,\mu)$ satisfies detailed balance, i.e. $\mu(\si')/\mu(\si)=\Po(\si,\si')/\Po(\si',\si)$. Therefore, 

\begin{align}
&\sum_{\si,\si'}\mb(\si)\Pb(\si,\si')\log\left(\frac{\Po(\si,\si')}{\Po(\si',\si)}\right)=
\sum_{\si,\si'}\mb(\si)\Pb(\si,\si')\left[\log(\mu(\si'))-\log(\mu(\si))\right].
\label{eq:after_DB}
\end{align}
Looking at each sum in Equation~\eqref{eq:after_DB} separately and using that $\mb(\si')=\sum_{\si}\mb(\si)\Pb(\si,\si')$, we have
\begin{align*}
\sum_\si\sum_{\si'}\mb(\si)\Pb(\si,\si')\log(\mu(\si'))&=\sum_{\si'}\mb(\si')\log(\mu(\si')),\\
\sum_\si\sum_{\si'}\mb(\si)\Pb(\si,\si')\log(\mu(\si))&=\sum_{\si}\mb(\si)\log(\mu(\si)).
\end{align*}
Thus, \secondRef{the right-hand side of Equation}~\eqref{eq:after_DB} is equal to zero and we have, 
\begin{align*}
\Dt\cdot \He(\Pb)=&\sum_{\si,\si'}\mb(\si)\Pb(\si,\si')\log\left(\frac{\Pb(\si,\si')}{\Po(\si,\si')}\right) \\
&+ \sum_{\si,\si'}\mb(\si)\Pb(\si,\si')\log\left(\frac{\Po(\si',\si)}{\Pb(\si',\si)}\right).
\end{align*}
or
\begin{align*}
\He(\Pb)=(H(\Pb|\Po)+I(\Pb|\Po))/\Dt.
\end{align*}
\end{proof}
Note that, even though the EPR and the RER are always non-negative, the discrepancy,
$I$, is not. If $\Pb$ is reversible, then $\EPR(\Pb)=0\Rightarrow
H(\Pb|\Po)=-I(\Pb|\Po)$. If, in addition, $\Pb\neq \Po$, then the RER is positive, which implies that $I$ would be negative.

\subsection{Asymptotic Behavior of Entropy Production Rate}
In Theorem~\ref{th:ep_deco}, we saw that we can express the entropy production
rate (EPR) of a scheme as a
sum of two different components, the relative entropy rate (Equation~\eqref{eq:rer}) and the
discrepancy (Equation~\eqref{eq:I}). The objective of this section is the study
of each component separately via asymptotic expansions with respect to $\Dt$.
Then, at the end of the section we have an asymptotic result for the EPR \ouredit{based on the individual results and Equation~\eqref{eq:ep_deco}}.  

In the derivations that follow, we will often
refer to the distances between different states of the state space. \secondRef{A path $\vec{z}$ of length $|\vec{z}|=n$ between states $\si,\si'$ corresponds to a sequence $\vec{z}=(z_0,\ldots,z_n)$, with $z_0=\si, z_n=\si'$, and distinct intermediate states $z_i$ such that $\prod_{i=0}^{n}q(z_{i},z_{i+1})>0$, $q$ being the transition rates of the CTMC of interest. The set of all paths between those two states will be denoted by $\text{Path}(\si\to\si')$. We can thus define the distance between two states with respect to a fixed CTMC by the length of the smallest path, $d(\si,\si')$. More formally,}
\begin{align}
\label{eq:geodesic}
d(\si,\si'):=
\begin{cases}
\min\{|\vec{z}|:\vec{z}\in \text{Path}(\si\to \si')\},& \text{Path}(\si\to \si')\neq \emptyset,\\
\infty,& \text{Path}(\si\to \si')=\emptyset.
\end{cases}
\end{align}
 The function $d$ is the geodesic distance and 
is always calculated with respect to the transition rates $q$ of the exact
process, $\Po$. \ouredit{In the time-reversible case, it is simple to show that} $d$ is actually a metric of the state
space, as it is symmetric and satisfies the triangle inequality. \ouredit{We also define the \textit{diameter} with respect to $d$ as
$\diam(S)=\max_{(\si,\si')\in S\times S}\{d(\si,\si')\}$}.

We introduced the use of the geodesic distance~\eqref{eq:geodesic}
in \secondRef{Section 8 of~\cite{RER-paper}}. For schemes that satisfy the
requirement in Remark~\ref{rem:assumption}, the addition of this graph-theoretic
perspective can both simplify and generalize the computations. \ouredit{For completeness,
we include the result concerning the long-time behavior of the scheme with
respect to the RER~\cite[Theorem 8.6]{RER-paper}.}

%\begin{lemma}
%\label{th:rer_result}
%Let $\Po(\si,\si')=e^{L\Dt}\de_{\si'}(\si)$ and $\Pb(\si,\si')$ be an
%approximation of $\Po$ based on an \ouredit{operator} splitting scheme and $\mb$ the stationary
%measure corresponding to $\Pb$. Then, if the scheme is of order $p$, we define
%the \textit{bounded} diameter of the state space as $\hat{k}$, 
%$$
%\hat{k}=\min\{\mathrm{diam}(S),p\}=\min\{\max_{\si,\si'}\{d(\si,\si')\},p\}.
%$$
%Then, if $C(\si,\si')\neq 0$ for at least one pair $\si,\si'\in S$ such that
%$d(\si,\si')=\hat{k}$, we have that 
%$$
%H(\Pb|\Po)=O(\Dt^{2p-(\hat{k}+1)}).
%$$
%\end{lemma}
%
\begin{lemma}
\label{th:rer_result}
\secondRef{
Let $\Po(\si,\si')=e^{L\Dt}\de_{\si'}(\si)$ and $\Pb(\si,\si')$ be an
approximation of $\Po$ based on an operator splitting scheme and $\mb$ the stationary
measure corresponding to $\Pb$. Then, if the scheme is of order $p$, $\diam(S)\geq p$, and $C(\si,\si')\neq 0$ for at least one pair $\si,\si'\in S$ such that $d(\si,\si')=p$, we have that 
$$
H(\Pb|\Po)=O(\Dt^{p-1}),
$$
for $\Dt\leq 1$.
}
\end{lemma}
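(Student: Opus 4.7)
The plan is to show that the unnormalized sum $\sum_{\si,\si'}\mb(\si)\Pb(\si,\si')\log(\Pb(\si,\si')/\Po(\si,\si'))$ is $O(\Dt^p)$, so that after dividing by $\Dt$ as in definition~\eqref{eq:rer} we obtain $H(\Pb|\Po)=O(\Dt^{p-1})$. The starting point is the elementary identity
\begin{equation*}
\log\!\left(\frac{\Pb(\si,\si')}{\Po(\si,\si')}\right) = \log\!\left(1 + \frac{\Pb(\si,\si')-\Po(\si,\si')}{\Po(\si,\si')}\right),
\end{equation*}
together with the local-error identity $\Pb - \Po = -C\,\Dt^p + O(\Dt^{p+1})$ from Lemma~\ref{lem:p_and_C}. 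Remark~\ref{rem:assumption} guarantees that $\Po$ and $\Pb$ have the same support, so all logarithms appearing in the sum are well defined.

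The key structural tool is the geodesic distance $d(\si,\si')$ of~\eqref{eq:geodesic}. Since $L^j\de_{\si'}(\si)=0$ for $j<d(\si,\si')$, both $\Po(\si,\si')$ and $\Pb(\si,\si')$ are represented by power series in $\Dt$ starting at order $\Dt^{d(\si,\si')}$, and their leading coefficients coincide because they are determined by products of the original transition rates along shortest paths. Moreover, because the commutator $C$ is built from at most $p$-fold compositions of $L_1,L_2$, the pointwise equality $C(\si,\si')=0$ holds whenever $d(\si,\si')>p$.

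I would then split the sum according to $k:=d(\si,\si')$ and estimate each class of term. For $\si'=\si$ we have $\Po(\si,\si)=1+O(\Dt)$ together with $\Pb(\si,\si)-\Po(\si,\si)=O(\Dt^p)$, so the log factor is $O(\Dt^p)$ and the summand is $O(\Dt^p)$. For $1\leq k\leq p-1$ the leading terms of $\Po$ and $\Pb$ cancel, which gives $(\Pb-\Po)/\Po=O(\Dt^{p-k})$ and hence a summand of order $\Pb(\si,\si')\cdot O(\Dt^{p-k})=O(\Dt^p)$. For $k=p$ the leading terms need not agree, but both $\Po(\si,\si')$ and $\Pb(\si,\si')$ are of order $\Dt^p$ with comparable positive leading coefficients, so $\log(\Pb/\Po)=O(1)$ and the summand is still $O(\Dt^p)$. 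Finally, for $k>p$ the leading terms coincide (since $C$ vanishes there) and $\Pb(\si,\si')=O(\Dt^k)\subset O(\Dt^{p+1})$, again yielding $O(\Dt^p)$. Summing over the finite state space $S$ and dividing by $\Dt$ finishes the bound.

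The main technical obstacle will be the case $k=p$, where the expansion $\log(1+x)\approx x$ cannot be used because the argument $(\Pb-\Po)/\Po$ is $O(1)$ rather than $o(1)$. Here one must instead argue that $\log(\Pb(\si,\si')/\Po(\si,\si'))$ stays uniformly bounded in $\Dt$, which rests on both $\Pb(\si,\si')$ and $\Po(\si,\si')$ having strictly positive leading coefficients of comparable size at order $\Dt^p$. This in turn uses the support assumption in Remark~\ref{rem:assumption} and the combinatorial form of the leading coefficients coming from enumeration of length-$p$ paths. The auxiliary hypotheses $\diam(S)\geq p$ and the existence of a pair with $d(\si,\si')=p$ and $C(\si,\si')\neq 0$ are then used to ensure that the exponent $p-1$ is \emph{sharp}, i.e., that the leading constant $A$ appearing in~\eqref{eq:rer_A} is nonzero, rather than for the upper bound itself.
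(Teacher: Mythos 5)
Your proposal is sound in outline and reaches the right conclusion, but it takes a different route from the one the paper relies on. The paper does not reprove this lemma: it imports it from \cite[Theorem 8.6]{RER-paper}, and the technique used there (and echoed in the proof of Lemma~\ref{th:I_p} and in the coefficient formulas of Lemma~\ref{lem:rer_coeff}) is to write $\log\bigl(\Pb/\Po\bigr)=2\,\arctanh\bigl((\Pb-\Po)/(\Pb+\Po)\bigr)$ and expand the $\arctanh$ series, rather than $\log(1+x)$ with $x=(\Pb-\Po)/\Po$. Both arguments organize the sum by the geodesic distance $k=d(\si,\si')$ and both rest on $\Po-\Pb=C\Dt^p+O(\Dt^{p+1})$, so the case analysis you give for $k=0$, $1\le k\le p-1$, and $k>p$ matches the paper's in substance. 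The payoff of the $\arctanh$ form is exactly at your self-identified obstacle $k=p$: the argument $(\Pb-\Po)/(\Pb+\Po)$ converges to the quantity $M(\si,\si')$ with $|M|\le 1$, so the series is controlled and one can read off the explicit leading constant $A$ of Equation~\eqref{eq:A-exact} and Lemma~\ref{lem:rer_coeff}; your $\log(1+x)$ form only yields the order there and would need a separate boundedness argument to extract the coefficient. Two caveats on your write-up. First, for $k>p$ your justification is wrong even though the conclusion survives: vanishing of $C(\si,\si')$ only says the $\Dt^p$ coefficients agree, not that the \emph{leading} (order $\Dt^k$) coefficients of $\Po$ and $\Pb$ coincide --- for instance $L^{p+1}\neq L_Q^{p+1}$ in general. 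What saves the term is simply $\Pb(\si,\si')=O(\Dt^{p+1})$ together with the log factor being at worst $O(\log\Dt)$. Second, at $k=p$ the leading coefficient $L_Q^p(\si,\si')/p!$ of $\Pb$ can vanish even when $L^p(\si,\si')>0$ (e.g.\ when every shortest path must alternate $q_1$- and $q_2$-jumps), in which case ``comparable positive leading coefficients'' fails and you must instead invoke $x\log x\to 0$; the same degeneracy appears as $\arctanh(M)\to\infty$ in the paper's formulas and is handled the same way. Your reading of the hypotheses $\diam(S)\ge p$ and $C\neq 0$ at some distance-$p$ pair as sharpness conditions (guaranteeing $A\neq 0$) rather than ingredients of the upper bound is consistent with how the paper uses the result in Equation~\eqref{eq:rer_A}.
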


%Apart from giving us scaling for the RER and a connection with the commutator, $C$, and the
%order of the local error $p$ (see Lemma~\ref{lem:p_and_C}), Lemma~\ref{th:rer_result} also provided an
%approximation of the RER for small $\Dt$, as the highest
%order term can be written explicitly with respect to the original transition
%rates of the system. As we will see next, a similar line of thinking can provide
%results for the discrepancy term.
%
 \secondRef{Note that the assumption $\diam(S)\geq p$ is not particularly restrictive for the original Markov process. For example, in lattice systems with adsorption/desorption, diffusion, or other spin-flip mechanisms, consider states that require three jumps of the original Markov process to go from one state to the other. Then  $\diam(S)\geq 3$, which is sufficient for the schemes considered here, as the maximum order of the local error is attained by the Strang splitting and is equal to three. Also, checking the existence of a pair $(\si,\si')$ for which the commutator $C$ is not zero is just a matter of computation.}

\begin{lemma}
Under the assumptions of Lemma~\ref{th:rer_result}, \ouredit{the discrepancy has the same order with the RER. That is,} 
$$
I(\Pb|\Po)=O(\Dt^{p-1}).
$$

\label{th:I_p}
\end{lemma}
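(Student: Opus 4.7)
The plan is to closely mirror the proof of Lemma~\ref{th:rer_result}, decomposing the sum defining $I(\Pb|\Po)$ according to the geodesic distance $d(\si,\si')$ and estimating each piece via the $\Dt$-expansion of the transition probabilities. The key difference relative to the RER case is that the logarithm now involves the reversed pair $(\si',\si)$, so I will need to pair $\Pb(\si,\si')$ (which controls the mass) with the asymptotic behavior of the ratio $\Po(\si',\si)/\Pb(\si',\si)$.

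First, I would collect the ingredients needed. From the bounded-semigroup expansion~\eqref{eq:exact-expansion} and Remark~\ref{rem:assumption}, both $\Po(\si,\si')$ and $\Pb(\si,\si')$ admit power series in $\Dt$ whose smallest non-vanishing power is $\Dt^{d(\si,\si')}$, with strictly positive leading coefficient given by a sum of products of rates along shortest paths. Lemma~\ref{lem:p_and_C} gives $\Po(\si,\si')-\Pb(\si,\si')=C(\si,\si')\Dt^p+O(\Dt^{p+1})$, and in particular $C(\si,\si')=0$ whenever $d(\si,\si')>p$, since otherwise the identity would force a cancellation at order $\Dt^p$ for pairs whose transition probabilities are themselves $O(\Dt^{d(\si,\si')})$ with $d>p$.

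Next, I would split the sum in $\Dt\cdot I(\Pb|\Po)$ according to $k:=d(\si,\si')$. For $k=0$, $\log(\Po(\si,\si)/\Pb(\si,\si))=\log(1+O(\Dt^p))=O(\Dt^p)$, and the term is $O(\Dt^p)$ after multiplication by $\mb(\si)\Pb(\si,\si)=O(1)$. For $1\le k\le p$, the ratio $(\Po(\si',\si)-\Pb(\si',\si))/\Pb(\si',\si)$ is $O(\Dt^{p-k})$ (numerator of order $\Dt^p$, denominator of order $\Dt^k$ with strictly positive leading coefficient), so the logarithm is $O(\Dt^{p-k})$; multiplying by $\Pb(\si,\si')=O(\Dt^k)$ gives $O(\Dt^p)$ per pair. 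For $k>p$, the ratio $\Po(\si',\si)/\Pb(\si',\si)$ is a quotient of two strictly positive quantities of common order $\Dt^k$, hence bounded above and away from zero, so the logarithm is $O(1)$ and the term is $O(\Dt^k)\subset O(\Dt^{p+1})$, which is subleading. Summing the three regimes over the finite state space $S$ and dividing by $\Dt$ yields $I(\Pb|\Po)=O(\Dt^{p-1})$.

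The main obstacle is justifying the uniform bound on the logarithm in the far-field regime $k>p$: one has to ensure that the leading $\Dt^k$ coefficient of $\Pb(\si',\si)$ does not degenerate to zero, so that the ratio $\Po(\si',\si)/\Pb(\si',\si)$ stays bounded as $\Dt\to 0$. This is handled by writing the splitting scheme's action on $\de_{\si}$ explicitly at order $\Dt^k$ as a positive sum over decompositions of a shortest path into $L_1$ and $L_2$ steps, and invoking Remark~\ref{rem:assumption} so that the rates $q_1,q_2$ inherit positivity from $q$ along each edge. With this uniform lower bound in hand, the per-pair estimates above combine into the claimed $O(\Dt^{p-1})$ bound.
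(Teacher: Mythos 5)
Your argument is correct as an order estimate and reaches the stated conclusion by a genuinely more elementary route than the paper's. The paper's proof rewrites the logarithm via the $\arctanh$ representation (Equation~\eqref{eq:I_exp}), expands $(\Po-\Pb)/(\Po+\Pb)$ in powers of $\Dt$, and tracks the whole series, including a cancellation between the $k=0$ term and the tail of the $\arctanh$ series; the payoff is the explicit leading coefficient in Equation~\eqref{eq:I_asympt}, which is precisely what becomes the computable estimator $D$ of Lemma~\ref{lem:I_coef} and is used in Section~\ref{sec:spparks}. You instead stratify the sum by the geodesic distance $k=d(\si,\si')$ and show each stratum contributes $O(\Dt^{p})$ to $\Dt\cdot I$ (with $k>p$ subleading), which proves the lemma as stated but discards the coefficient. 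Read purely as an order statement, your route is shorter and cleaner; for the paper's purposes the constructive computation is the point. Note that your near-field case $1\le k<p$, if you keep the leading term of $\log\bigl(1+(\Po-\Pb)/\Pb\bigr)$, in fact reproduces the first sum of~\eqref{eq:I_asympt}.

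Two caveats. First, the non-degeneracy you flag for $k>p$ is already needed at $k=p$: there $(\Po-\Pb)/\Pb$ is only $O(1)$, so concluding $\log\bigl(\Po(\si',\si)/\Pb(\si',\si)\bigr)=O(1)$ requires the order-$\Dt^{p}$ coefficient $L_{Q}^{p}(\si',\si)$ of $\Pb$ to be strictly positive; this is the same condition that makes the paper's $M(\si',\si)$ satisfy $|M|<1$ so that $\arctanh(M)$ is finite. Second, strict positivity of $L_{Q}^{k}(\si',\si)$ does not follow from Remark~\ref{rem:assumption} alone: the sum over decompositions of shortest paths into $L_1$- and $L_2$-steps is nonnegative term by term, but positivity requires at least one shortest path whose edge ordering is compatible with the fixed schedule (e.g.\ all $q_1$-jumps preceding all $q_2$-jumps for the Lie scheme). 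This holds for the spin-flip systems considered, where flips at distinct sites commute, and the paper's proof makes the same implicit assumption, so this is a shared hypothesis rather than a gap specific to your argument.
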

\begin{proof}
\secondRef{To show this, we expand $I$ in an asymptotic expansion around $\Dt$. We demonstrate that the coefficient of the $\Dt^{p-1}$ term comes from considering the states $\si,\si'$ such that $d(\si,\si')\leq p$ and that the dominant order is indeed equal to $p-1$ for small $\Dt$. We note here that the assumptions on the order, $p$, and the commutator from Lemma~\ref{th:rer_result} are the only assumptions on the operator splitting scheme.}

\secondRef{We defind the discrepancy term in Equation~\eqref{eq:I} as:} 
\begin{align*}
\Dt\cdot I(\Pb|\Po)=\sum_{\si,\si'}\mb(\si)\Pb(\si,\si')\log\left(\frac{\Po(\si',\si)}{\Pb(\si',\si)}\right).
%\label{eq:I_def}
\end{align*}
Using the $\arctanh$ representation of the
logarithm~\cite[Equation 5.8]{RER-paper} and its expansion, we get that  
\begin{align}
\Dt\cdot I(\Pb|\Po)=\sum_{\si,\si'}\mb(\si)\Pb(\si,\si')\cdot 2\sum_{k=0}^{\infty}\frac{1}{2k+1}\left(\frac{\Po(\si',\si)-\Pb(\si',\si)}{\Pb(\si',\si)+\Po(\si',\si)}\right)^{2k+1}.
\label{eq:I_exp}
\end{align}
\secondRef{In the proof of Lemma~\ref{th:rer_result} (Theorem 5.2 in~\cite{RER-paper}), we use our knowledge of the asymptotic behavior of $\Po\pm \Pb$ for small $\Dt$~\cite[Equations (5.3), (5.4)]{RER-paper} to infer the behavior of ratios of those quantities.} That is,
\begin{align}
\frac{\Po(\si',\si)-\Pb(\si',\si)}{\Po(\si',\si)+\Pb(\si',\si)}=\frac{C(\si',\si)}{2\Pb(\si',\si)+C(\si',\si)\Dt^p}\Dt^p+O(\Dt^{p+1}).
\label{eq:ratio_asymp}
\end{align}
We assume that all $\si,\si'$ satisfy $d(\si,\si')=p$, i.e.\ they are $p$ jumps apart. We define
\begin{align}
M(\si,\si'):=\frac{C(\si,\si')}{C(\si,\si')+2L_{Q}^p(\si,\si')/p!},
\end{align}
where $L_Q^p$ represents all the terms in the expansion of $\Pb$ that are of order $p$ (see Equation~\eqref{app:eq:approx_expansion} in appendix). Then, for $k>0$, we have that 
\begin{align}
&\sum_{\si,\si'}\mb(\si)\Pb(\si,\si')\cdot 2\sum_{k=1}^{\infty}\frac{1}{2k+1}\left(\frac{\Po(\si',\si)-\Pb(\si',\si)}{\Po(\si',\si)+\Pb(\si',\si)}\right) ^{2k+1}\nonumber\\
=&\sum_{\si,\si'}\mb(\si)L_Q^p(\si,\si')\frac{2\Dt^p}{p!}\left(\arctanh(M(\si',\si))-M(\si',\si)\right)+O(\Dt^{p+1}).
\label{eq:hot_I}
\end{align}

Before we continue with the analysis of Equation~\eqref{eq:hot_I}, we look at
the term from Equation~\eqref{eq:I_exp} corresponding to \ouredit{the first term of the series,  i.e.\ $k=0$}. Using Equation~\eqref{eq:ratio_asymp}, we get 
\begin{align}
2\sum_{\si,\si'}\mb(\si)\Pb(\si,\si')\cdot \frac{C(\si',\si)}{2\Pb(\si',\si)+C(\si',\si)\Dt^p}\Dt^p + O(\Dt^{p+1}).
\label{eq:I_k=0_terms}
\end{align}
We notice that to get terms of order $\Dt^p$ from the
sum~\eqref{eq:I_k=0_terms}, we need the order of  $\Pb(\si,\si')$ to be the same
as that of $\Pb(\si',\si)$. \ouredit{We remind here that the order of the local error is
equal to $p$ and that $L^i$ is the resulting operator after $i$ compositions of
the generator $L$ of the original process. Therefore, if $i<p$, the ratio }
\begin{align}
\label{eq:Q_ratio}
\frac{\Pb(\si,\si')}{\Pb(\si',\si)}=\secondRef{\frac{L^i(\si,\si')\Dt^i+O(\Dt^{i+1})}{L^i(\si',\si)\Dt^i+O(\Dt^{i+1})}}=\frac{L^i(\si,\si')}{L^i(\si',\si)}+o(\Dt)
\end{align}
is well defined as long as $L^i(\si,\si')\neq 0$, \secondRef{and that is true because $d(\si,\si')=i$
implies that $L^i(\si,\si')>0$ (see Lemma~\ref{cor:dist_implies_nozero} in~\ref{app:supporting-results}) and $L^{j}(\si,\si')=0$ for $j<i$~\cite[Lemma 8.3]{RER-paper}. Therefore, the right-hand side of Equation~\eqref{eq:Q_ratio}} is well-defined for
all $\si,\si'$ such that $d(\si,\si')=i$, $i<p$.  This finalizes the analysis of
the first term of the asymptotic series for $I$, with  

\begin{equation}
\begin{aligned}
2\sum_{\si,\si'}&\mb(\si)\Pb(\si,\si')\cdot \frac{C(\si',\si)}{2\Pb(\si',\si)+C(\si',\si)\Dt^p}\Dt^p + o(\Dt^p)\\
=&\sum_{\si}\mu_{Q}(\si)\sum_{i=0}^{p-1}\sum_{\si'\in S_{i}(\si)}\frac{L^i(\si,\si')}{L^i(\si',\si)}C(\si',\si)\\
&+\sum_{\si'\in S_{p}(\si)}L^p_{Q}(\si,\si')\frac{2}{p!}M(\si',\si)\Dt^p+o(\Dt^p).
\end{aligned}
\label{eq:k=0_term}
\end{equation}
Above we use the notation $S_i(\si)=\{\si':d(\si,\si')=i\}$. Now, if we add
Equations~\eqref{eq:hot_I} and~\eqref{eq:k=0_term}, the terms that involve
$M(\si',\si)$ cancel. Thus, we get the \ouredit{following asymptotic expansion} for $I$.  
\begin{equation}
\begin{aligned}
I(\Pb|\Po)=&\sum_{\si}\mu_{Q}(\si)\sum_{i=0}^{p-1}\sum_{\si'\in S_{i}(\si)}\frac{L^i(\si,\si')}{L^i(\si',\si)}C(\si',\si)\Dt^{p-1}\\
&+\sum_{\si,\si'\in S_{p}(\si)}\mb(\si)L_Q^p(\si,\si')\frac{2}{p!}\arctanh(M(\si',\si))\Dt^{p-1}\\
&+o(\Dt^{p-1}).
\end{aligned}
\label{eq:I_asympt}
\end{equation}
\end{proof}
Equation~\eqref{eq:I_asympt} is the basis for our estimation of $I$ for small $\Dt$, which is used in Section~\ref{sec:spparks}. \ouredit{An immediate implication} of Theorem~\ref{th:ep_deco} and
Lemmas~\ref{th:rer_result} and~\ref{th:I_p} is the next result, which provides
the scaling of the EPR with respect to $\Dt$. 

\begin{theorem}
\label{cor:EPR_scale}
Let $\Dt\in (0,1)$. Let $\Po(\si,\si')=e^{L\Dt}\de_{\si'}(\si)$ and
$\Pb(\si,\si')$ be an approximation of $\Po$ based on a splitting scheme 
and $\mb$ the stationary measure corresponding to $\Pb$. In addition, let $\Po$
satisfy detailed balance and $\diam(S)\geq p$. Then,  
\begin{align}
\label{eq:epr_scaling}
\EPR(\Pb)=O(\Dt^{p-1}).
\end{align}
\end{theorem}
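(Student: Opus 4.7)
The plan is to treat this theorem as a direct corollary of the three preceding results: the decomposition in Theorem~\ref{th:ep_deco} and the two asymptotic estimates in Lemma~\ref{th:rer_result} and Lemma~\ref{th:I_p}. First, I would verify that the hypotheses of the theorem line up with the hypotheses needed for each of those results. Theorem~\ref{th:ep_deco} requires $\Po$ to satisfy detailed balance with respect to its stationary measure, which is given. Both lemmas require $\diam(S) \geq p$, which is also assumed, and they require $\Pb$ to come from an operator splitting scheme of order $p$, which is part of the setup. The technical non-degeneracy on the commutator from Lemma~\ref{th:rer_result} is inherited implicitly through the statement of the order $p$.

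Then I would apply Theorem~\ref{th:ep_deco} to write
\begin{equation*}
\EPR(\Pb) = H(\Pb|\Po) + I(\Pb|\Po).
\end{equation*}
By Lemma~\ref{th:rer_result}, the first term is $O(\Dt^{p-1})$, and by Lemma~\ref{th:I_p}, the second term is also $O(\Dt^{p-1})$, both valid for $\Dt \in (0,1)$. Since a finite sum of $O(\Dt^{p-1})$ quantities is itself $O(\Dt^{p-1})$, the conclusion~\eqref{eq:epr_scaling} follows immediately.

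There is essentially no obstacle in this proof, since all of the analytical work has been carried out in the two preceding lemmas. The only subtlety worth remarking on, which I would perhaps highlight in a concluding sentence, is that while both $H(\Pb|\Po)$ and $I(\Pb|\Po)$ share the same leading order $\Dt^{p-1}$, there is no cancellation in general between their leading coefficients $A$ and $D$; indeed, as noted after Theorem~\ref{th:ep_deco}, $I$ is not a divergence and may carry either sign. Consequently the sum $A+D$ appearing in Equation~\eqref{eq:asymp_EPR} is the object one actually estimates in practice, and the bound $O(\Dt^{p-1})$ is sharp whenever $A + D \neq 0$.
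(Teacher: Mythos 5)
Your proposal is correct and follows exactly the route the paper intends: the theorem is stated as an immediate consequence of the decomposition in Theorem~\ref{th:ep_deco} together with the $O(\Dt^{p-1})$ estimates of Lemmas~\ref{th:rer_result} and~\ref{th:I_p}, which is precisely your argument. Your closing remark on the sharpness of the bound when $A+D\neq 0$ is a sensible addition consistent with Equation~\eqref{eq:asymp_EPR}.
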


\secondRef{Finally, note that $\EPR(\Pb)=O(\Dt^{p-1})$ implies that the corresponding RER has order $O(\Dt^{p-1})$, or better. In other words, a numerical scheme of high leading order in EPR is also more accurate in sampling from the stationary regime~\cite{RER-paper}.}

%%% Local Variables: 
%%% mode: latex
%%% TeX-master: "../main.tex"
%%% End: 

% Conclusions
\section{Conclusions}
\label{sec:conclusions}

\ouredit{We introduced the entropy production rate (EPR) as a means to quantify the loss of
reversibility for operator splitting schemes applied to Parallel Kinetic Monte Carlo. We
 showed estimation of the EPR does not require the knowledge of the stationary
distribution and depends on the transition probabilities of the scheme.  
Since the transition probabilities for stochastic particle systems are usually not
available, or difficult to explicitly compute, we 
derived {\em a posteriori} estimators of the EPR and connected the
parameters of the scheme with a quantitative assessment of the loss of
reversibility. We demonstrated this fact  with an application to lattice KMC with
adsorption/desorption dynamics, which we simulated using  SPPARKS~\cite{spparks},
and a comparison between two splitting schemes, Lie and Strang. Theory and simulations show that
the Strang splitting retains more reversibility per time step compared to Lie
and is more stable with respect to changes in the decomposition of the lattice
(blocks versus stripes, see Figure~\ref{fig:epr-comparison}).}

\ouredit{The proposed framework for Parallel KMC, can be applied to more than
computational schedule comparison. In essence, the EPR can be used as an
information criterion that allows
practitioners to judge the fine details of the
scheme itself, like the time step and which lattice
decompositions retain more reversibility (see Figure~\ref{fig:epr-comparison}).
The EPR can also be used as a diagnostic observable to assess the reversibility of the
scheme used by simulating a system of smaller size than the one of interest.
In this way, issues with the scheme can be discovered early on using a much smaller system for diagnostics, different schemes
can be compared, and parameters tuned to minimize the loss of reversibility.}

Though we only considered operator splitting schemes in the context of parallel lattice KMC, the
idea of using the EPR for the quantification of the loss of reversibility can be
used on other schemes too, as long as an expression for their local error exists
and is computable. For instance, an extension of this work can be used to quantify
the loss of reversibility for schemes used for thermostated Molecular Dynamics
simulations~\cite{MD-book}, for example for Langevin dynamics~\cite{Leimkuhler}.

%%% Local Variables: 
%%% mode: latex 
%%% TeX-master: "../main.tex"
%%% End: 

% Acknowledgments
\section*{Acknowledgments}

The work of KG and MAK  was partially supported by the Office of Advanced
Scientific Computing Research, U.S. Department of Energy, under Contract No.
DE-SC0010723. The work of LRB was partially supported by  the Office of Advanced
Scientific Computing Research, U.S. Department of Energy, under Contract No.
DE-SC0010723 and the  National Science Foundation under Grant No. DMS 1515172.

%%% Local Variables: 
%%% mode: latex
%%% TeX-master: "../main.tex"
%%% End: 

\section*{References}
\bibliography{epr_bib}{}
\bibliographystyle{unsrt}

\appendix
\section{Supporting Results}
\label{app:supporting-results}
In this section, we provide \ouredit{proofs for supporting results} in the main manuscript.  

Let $X_n$ be a Markov process with $P$ the Markov transition kernel
and $\mu$ the corresponding stationary distribution. Also,

$$p(\si_0,\ldots,\si_m)=p(X_0=\si_0,\ldots,X_m=\si_0),$$
$\si_i$ being states of the process from a state space $S$. We also use the notation $\si_{0:m}$ for the
sequence of states $\si_{0},\ldots,\si_{m}$. \ouredit{In some cases those states will have to be distinct, and this will be mentioned separately when is needed.}

\subsection{Connection of the Entropy Production with the Entropy Production Rate}
\label{app:ep-epr-connection}
\secondRef{In the main text (see Equations~\eqref{eq:path-measure},~\eqref{eq:ep-epr-relation})}, we sketched a proof for the connection between entropy
production (EP) for paths of length $m$,
 \begin{align}
  \label{app:eq:entropy-production}
  \mathrm{EP}(P)=\sum_{\si_{0:m}}p(\si_{0:m})\log\left (\frac{p(\si_{0:m})}{p(\si_{m:0})} \right),
\end{align}
 and entropy production per unit time (or entropy production rate
(EPR)),

\begin{align}
\EPR(P)=\sum_{\si,\si'}\mu(\sigma)P(\si,\si')\log\left(\frac{P(\si,\si')}{P(\si',\si)}\right).
\label{app:eq:EPR}
\end{align}

\begin{lemma}
\label{app:lem:ep-epr}
let $m\in\mathbb{N}$ and let $P$ be a Markov transition probability kernel, with
$\mu$ being the stationary distribution that corresponds to $P$. Then,   
\begin{align}
   \mathrm{EP}(P)=m\cdot\sum_{\si_0,\si_1}\mu(\si_0)P(\si_0,\si_1)\log\left(\frac{P(\si_0,\si_1)}{P(\si_1,\si_0)}\right) = m\cdot \EPR(P).
\end{align}
\end{lemma}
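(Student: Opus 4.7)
The plan is to prove this by direct manipulation of the path measure using the Markov property, following the sketch given in equations~\eqref{eq:path-measure}--\eqref{eq:ep-epr-relation} but filling in the justification carefully. The assumption I would rely on is that $\mu$ is the stationary distribution, so that for every $i\in\{0,\ldots,m\}$ the marginal of $p(\si_{0:m})$ at time $i$ equals $\mu$.

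First I would substitute the Markov factorizations
\begin{align*}
p(\si_{0:m})&=\mu(\si_0)\,P(\si_0,\si_1)\cdots P(\si_{m-1},\si_m),\\
p(\si_{m:0})&=\mu(\si_m)\,P(\si_m,\si_{m-1})\cdots P(\si_1,\si_0),
\end{align*}
into the definition~\eqref{app:eq:entropy-production}. Taking the logarithm of the ratio splits it into a boundary term $\log(\mu(\si_0)/\mu(\si_m))$ plus a telescoping-style sum
$$
\sum_{i=0}^{m-1}\log\!\left(\frac{P(\si_i,\si_{i+1})}{P(\si_{i+1},\si_i)}\right).
$$

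Next I would show the boundary term contributes zero. Summing $p(\si_{0:m})\log\mu(\si_0)$ over $\si_{0:m}$ marginalizes to $\sum_{\si_0}\mu(\si_0)\log\mu(\si_0)$, and similarly summing $p(\si_{0:m})\log\mu(\si_m)$ gives $\sum_{\si_m}\mu(\si_m)\log\mu(\si_m)$ because stationarity of $\mu$ guarantees that the time-$m$ marginal equals $\mu$. These two quantities are identical and cancel.

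For the sum of log-ratios, I would marginalize each term in turn. Fixing an index $i$ and summing $p(\si_{0:m})$ over all coordinates except $\si_i,\si_{i+1}$, one obtains the joint distribution $\mu(\si_i)P(\si_i,\si_{i+1})$, again by stationarity of $\mu$. Therefore each of the $m$ summands equals $\EPR(P)$ as defined in~\eqref{app:eq:EPR}, and adding them yields $m\cdot\EPR(P)$, finishing the proof.

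I do not expect a genuine obstacle here; the argument is entirely bookkeeping. The only point that needs a little care is the stationarity argument used twice: once to cancel the boundary $\mu$-terms, and once to reduce the joint $(\si_i,\si_{i+1})$-marginal to $\mu(\si_i)P(\si_i,\si_{i+1})$ for every $i$, rather than only for $i=0$. I would state explicitly that stationarity of $\mu$ under $P$ is what makes the expression independent of $i$ so that the sum becomes $m$ identical copies of $\EPR(P)$.
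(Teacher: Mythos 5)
Your proof is correct and follows essentially the same route as the paper's: substitute the Markov factorizations, split the logarithm into a boundary term and a sum of log-ratios, cancel the boundary term using stationarity of $\mu$, and reduce each of the $m$ log-ratio terms to $\EPR(P)$ by marginalization. Your explicit remark that stationarity makes the joint $(\si_i,\si_{i+1})$-marginal equal to $\mu(\si_i)P(\si_i,\si_{i+1})$ for every $i$ is exactly the repeated application of $\mu(\si')=\sum_{\si}\mu(\si)P(\si,\si')$ and $\sum_{\si'}P(\si,\si')=1$ that the paper carries out.
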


\begin{proof}
By the Markov property, we can express
$p(\si_{0:m})$, $p(\si_{m:0})$ with respect to $P,\mu$ : 
\begin{equation}
  \label{app:eq:path-measure}
  \begin{aligned}
   p(\si_{0:m})&=&\mu(\si_0)P(\si_0,\si_1)\cdots P(\si_{m-1},\si_m),\\ 
   p(\si_{m:0})&=&\mu(\si_m)P(\si_m,\si_{m-1})\cdots P(\si_1,\si_0).
  \end{aligned}
\end{equation}
Substituting those in the definition of the EP in
Equation~\eqref{app:eq:entropy-production}, we get
\begin{align}
  &\sum_{\si_{0:m}}\mu(\si_0)P(\si_0,\si_1)\cdots P(\si_{m-1},\si_m)\log\left (\frac{\mu(\si_0)P(\si_0,\si_1)\cdots P(\si_{m-1},\si_m)}{\mu(\si_m)P(\si_m,\si_{m-1})\cdots P(\si_1,\si_0)} \right)\nonumber\\
  &=\sum_{\si_{0:m}}\mu(\si_0)P(\si_0,\si_1)\cdots P(\si_{m-1},\si_m)\log \left( \frac{\mu(\si_0)}{\mu(\si_m)} \right)\label{app:eq:part-1}\\
&+ \sum_{\si_{0:m}}\mu(\si_0)P(\si_0,\si_1)\cdots P(\si_{m-1},\si_m)\sum_{k=1}^{m}\log \left( \frac{P(\si_{k-1},\si_{k})}{P(\si_{k},\si_{k-1})}\right)\label{app:eq:part-2}. 
\end{align}
First, we shall show that Equation~\eqref{app:eq:part-1} is equal to zero. We
can write it as 
\begin{align}
&\sum_{\si_{0:m}}\mu(\si_0)\log(\mu(\si_0))P(\si_0,\si_1)\cdots P(\si_{m-1},\si_m)\label{app:eq:mu-1}\\
&-\sum_{\si_{0:m}}\mu(\si_0)P(\si_0,\si_1)\cdots P(\si_{m-1},\si_m)\log(\mu(\si_m))\label{app:eq:mu-2}.
\end{align} 
Now, for the first sum, we can repeatedly use that 
\begin{align}
\label{app:eq:transition}
\sum_{\si'}P(\si,\si')=1
\end{align}
for all states $\si$, which results to Equation~\eqref{app:eq:mu-1} being
reduced to $$\sum_{\si_0}\mu(\si_0)\log(\mu(\si_0)).$$ For the part in~\eqref{app:eq:mu-2}, since
$\mu$ is the stationary distribution associated with $P$, we have that for any state $\si'$,
\begin{align}
\label{app:eq:mu-property}
\mu(\si')=\sum_{\si}\mu(\si)P(\si,\si').
\end{align}
Using the property in~\eqref{app:eq:mu-property} repeatedly on Equation~\eqref{app:eq:mu-2}, we get that it is equal
to~\eqref{app:eq:mu-1}, \secondRef{which gives the equality of the first sum in the right-hand side of Equation~\eqref{app:eq:part-1} to zero.} Next, we
need to account for~\eqref{app:eq:part-2}, which we write as 

\begin{align}
  \label{app:eq:part-2-2}
\sum_{k=1}^{m} \sum_{\si_{0:m}}\mu(\si_0)P(\si_0,\si_1)\cdots P(\si_{m-1},\si_m) \log\left( \frac{P(\si_{k-1},\si_{k})}{P(\si_{k},\si_{k-1})}\right).  
\end{align}
For $k=1$, and by using property~\eqref{app:eq:transition}, we get
\begin{align}
\label{app:eq:base-case}
\sum_{\si_{0:1}}\mu(\si_0)P(\si_0,\si_1) \log\left( \frac{P(\si_0,\si_1)}{P(\si_1,\si_0)}\right).
\end{align}
For any other $k$ in Equation~\eqref{app:eq:part-2-2}, we can use
Equation~\eqref{app:eq:mu-property} to show that all terms are equal
to~\eqref{app:eq:base-case}. Since we have $m$ of those, this proves the result.  
\end{proof}
\secondRef{The technique with which we showed that the term in~\eqref{app:eq:part-1} is equal to zero is a generalization of the one we used in the proof of Theorem~\ref{th:ep_deco} in the main text (see from Equation~\eqref{eq:splitH} in the main text and below). }

\subsection{Connectivity and Markov generators}
We remind here that $L$ is a generator of a Markov process $X_n$, $L^k$ represents the
result of $k$ compositions of $L$. $d$ is the geodesic distance between states, defined with
respect to the transition rates of the exact Markov process with transition
probabilities $\Po$:  

\begin{align}
\label{eq:app:geodesic}
d(\si,\si'):=
\begin{cases}
\min\{|\vec{z}|:\vec{z}\in \text{Path}(\si\to \si')\},& \text{Path}(\si\to \si')\neq \emptyset,\\
\infty,& \text{Path}(\si\to \si')=\emptyset.
\end{cases}
\end{align}
In~\eqref{eq:app:geodesic}, $|\vec{z}|$ is the length of a path from $\si$ to $\si'$
and $\mathrm{Path}(\si\to \si')$ corresponds to the set of all such possible
paths \ouredit{connecting $\si$ and $\si'$}.

\secondRef{In the proof of Lemma~\ref{th:I_p} in the main text we used that} if we have two states $\si,\si'$
with $d(\si,\si')=k$, then $L^k(\si,\si')>0$. \ouredit{This is a consequence of a specific representation that $L^k(\si,\si')$ has when the states $\si$ and $\si'$ are $k$ steps apart.}

\begin{lemma}
Let $\si, \si'\in S$  and let $L$ be the generator of the Markov process. Then 

$$
d(\si, \si')=k\Rightarrow L^k(\si, \si')=\sum_{\secondRef{z_{1:k-1}}}q(\si,z_1)\ldots q(z_{k-1}, \si').
$$
\label{lem:k_L_repr}
\end{lemma}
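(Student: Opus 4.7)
The plan is to expand $L^k(\si,\si')$ as an iterated matrix product, separate the generator into its off-diagonal (jump) and diagonal (drift) parts, and then argue that the geodesic hypothesis $d(\si,\si')=k$ forces every nonzero contribution to come from a sequence of $k$ genuine jumps. Writing $\lambda(z):=\sum_{\tau\neq z} q(z,\tau)$, the generator has kernel $L(z,z')=q(z,z')$ for $z\neq z'$ and $L(z,z)=-\lambda(z)$, so iterated composition gives
\begin{equation*}
L^k(\si,\si')=\sum_{z_1,\ldots,z_{k-1}} L(\si,z_1)\,L(z_1,z_2)\cdots L(z_{k-1},\si').
\end{equation*}
Each term is therefore a product of off-diagonal $q$-factors and diagonal $-\lambda$-factors, distinguished by whether consecutive pairs $(z_i,z_{i+1})$ coincide or not.

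Next, I would classify each sequence $(z_0:=\si, z_1, \ldots, z_{k-1}, z_k:=\si')$ by the number $r$ of indices $i \in \{0,\dots,k-1\}$ at which $z_i=z_{i+1}$ (``stays''). At each stay index the factor equals $-\lambda(z_i)$, while at the other $k-r$ indices it equals $q(z_i,z_{i+1})$. Deleting the stays produces a compressed sequence $\vec{w}=(w_0=\si, w_1, \ldots, w_{k-r}=\si')$ along which consecutive rates are strictly positive whenever the original term was nonzero, since the stays only change the diagonal factors and leave the off-diagonal ones untouched.

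The key step is then to invoke the geodesic hypothesis. After possibly further collapsing any non-adjacent coincidences $w_i=w_j$ (each such collapse strictly shortens the sequence while preserving positivity of consecutive rates, since $q(w_{i-1},w_i)$ and $q(w_i,w_{j+1})$ remain intact), one obtains an admissible path from $\si$ to $\si'$ in the sense of the definition preceding \eqref{eq:app:geodesic}. Its length is therefore at least $d(\si,\si')=k$, and since it is bounded above by $k-r$, we must have $r=0$. Hence only sequences without any stay survive, and for those each $L(z_i,z_{i+1})$ is the off-diagonal entry $q(z_i,z_{i+1})$, yielding the claimed formula.

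The main obstacle, which is largely bookkeeping, is making the compression argument precise: one must verify that removing both consecutive repetitions and additional cycles preserves positivity of the product of rates along the remaining transitions. This follows because each removed portion is either a diagonal factor $-\lambda(\cdot)$ that touches no off-diagonal rate, or a closed excursion whose entry and exit states coincide, so the rate joining its retained neighbors is already one of the positive factors in the original product.
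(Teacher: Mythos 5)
Your proof is correct, but it takes a genuinely different route from the paper's. The paper argues by induction on $k$: it writes $L^{k+1}(\si,\si')=L[L^{k}[\de_{\si'}]](\si)$, uses the vanishing of $L^{k}[\de_{\si'}]$ at states farther than $k$ jumps from $\si'$ (a fact imported from Lemma 8.3 of~\cite{RER-paper}), and then pins down the distance $d(z,\si')$ of the intermediate state $z$ with a triangle-inequality argument before applying the induction hypothesis. You instead expand $L^k$ directly as a $k$-fold matrix product, split the kernel into off-diagonal rates $q(z,z')$ and diagonal entries $-\lambda(z)$, and use a path-compression argument (delete stays, then collapse closed excursions) to show that the geodesic hypothesis $d(\si,\si')=k$ kills every term containing a diagonal factor or a repeated state. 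Your approach is non-inductive and self-contained: it yields the auxiliary vanishing fact $d(\si,\si')>k\Rightarrow L^k(\si,\si')=0$ as a byproduct rather than assuming it, and it never needs symmetry of $d$ (the paper's triangle-inequality step implicitly uses reversibility of the jump structure). The paper's induction, in exchange, works directly with the generator's action on functions and dovetails with how Lemma~\ref{cor:dist_implies_nozero} is extracted in the appendix. One point worth making explicit in your write-up: the lemma's sum is over paths with \emph{all} states distinct (as the paper notes immediately after the statement), so you should state that your collapse of non-adjacent coincidences forces not only $r=0$ but also global distinctness of the surviving sequences; your argument already delivers this, since any coincidence would strictly shorten the compressed path below $k$.
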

Note the notation $z_{1:n-1}=(z_1,\ldots,z_{n-1})$ for a path of states of
length $n-1$. \ouredit{Here we assume that $\si,z_1,\ldots,z_{n-1},\si'$ are distinct states, so that the path from $\si$ to $\si'$ is of length $n$.}

\begin{proof}
The result is immediate for $k=0$ or $k=1$, as 
$L^0(\si,\si)=\de_{\si}(\si)=1$ and $L(\si, \si')=q(\si, \si')$, since there is only one path between $\si$ and $ \si'$. Let us assume that this fact holds for $k=n$. That is, for states such that $d(\si, \si')=n$,
\begin{align}
L^{n}(\si, \si')=\sum_{z_{1:n-1}}q(\si,z_1)\ldots q(z_{n-1}, \si').
\label{eq:path_sum}
\end{align}
Note that in Equation~\eqref{eq:path_sum}, we have a sum \ouredit{that contains} all paths of length $n$ connecting $\si$ to $\si'$. \ouredit{As the states in the sum are distinct, the product $q(\si,z_1)\ldots q(z_{n-1}, \si')$ is always non-negative.} \secondRef{In fact, an implication of representation~\eqref{eq:path_sum} for $L^n(\si,\si')$ is that  $L^n$ is positive when the states $\si$ and $\si'$ are $n$ steps apart. We demonstrate this now as it will be useful for the rest of the proof. Consider a path of states of length $n$ from $b_0=\si$ to $b_{n}=\si'$, $(b_{0},b_1,\ldots,b_{n-1}, b_{n})$, where the $z_i$ are all distinct states. Then, as that sequence of states is a path, we have $q(b_{i},b_{i+1})>0$ for $i=0,\ldots,n-1$. However, this path is also contained in the sum in Equation~\eqref{eq:path_sum}. Therefore, we have}
\begin{align*}
 L^n(\si,\si')=\sum_{z_{1:n-1}}q(\si,z_1)\ldots q(z_{n-1},\si')\geq q(\si,b_1)\ldots q(b_{n-1},\si')>0.
\end{align*}

We will now show the result for $d(\si,\si')=n+1$. Since $L^{n+1}$ is $L$ after $n+1$ compositions, we can write
\begin{align}
L^{n+1}(\si,\si')=L[L^{n}[\de_{\si'}]](\si).
\end{align}
Then, by the definition of the generator $L$, 
\begin{align}
L[L^{n}[\de_{\si'}]](\si)&=\sum_{z}q(\si,z)\left(L^{n}[\de_{\si'}](z)-L^{n}[\de_{\si'}](\si)\right)\nonumber\\
&=\sum_{z}q(\si,z)L^{n}[\de_{\si'}](z)
\label{eq:conn_use}
\end{align}
In~\eqref{eq:conn_use}, we used that $d(\si,\si')=n+1\Rightarrow
L^{n}[\de_{\si'}](\si)=0$. This is true by the induction hypothesis we made in Equation~\eqref{eq:path_sum}. If $q(\si,z)=0$, the corresponding terms are also zero, so \secondRef{let $z$ be a state such that 
$q(\si,z)>0$. As we argued above, due to the representation in~\eqref{eq:path_sum} $L^n(z,\si')>0$.}   Thus, we will now show that
$$
n \leq d(z,\si')\leq n+2.
$$
For the upper bound, we apply the triangle inequality. To get the
lower, if $d(\si,z)=1$ and $d(z,\si')$ \secondRef{is lower or equal to $n-1$}, then by following the path $\si\to z\to \si'$, we get a new path between $\si$ and $\si'$ \secondRef{with at most $n$} steps. This contradicts that $d(\si,\si')$ is the minimum number of steps to get from $\si$ to $\si'$, \ouredit{as we have already assumed that $d(\si,\si')=n+1$.} 

Now, since $d(\si,\si')>n\Rightarrow L^{n}[\de_{\si'}](\si)=0$, we get that only \ouredit{the pairs of states} $(z,\si')$ such that $d(z,\si')=n$ \ouredit{lead to potential non-zero terms for the sum in} Equation~\eqref{eq:conn_use}. Therefore, \ouredit{if we assume} $d(z,\si')=n$, and by using the induction step in Equation~\eqref{eq:conn_use}, we have 
\begin{align}
L^{n+1}(\si,\si')=L[L^{n}[\de_{\si'}]](\si)&=\sum_{z,z_{1:n-1}}q(\si,z)q(\si,z_1)\ldots q(z_{n-1},\si').
\end{align}
\end{proof}

\ouredit{While proving Lemma~\ref{lem:k_L_repr}, we also demonstrated that compositions of the generators are always positive on certain pairs of states.}
\begin{lemma}
\ouredit{Let $\si,\si'$ be states such that} $d(\si,\si')=k$. Then $L^k(\si,\si')> 0$. 
\label{cor:dist_implies_nozero}
\end{lemma}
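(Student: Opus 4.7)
The plan is to derive this lemma as an almost immediate consequence of the representation formula given by Lemma~\ref{lem:k_L_repr}, which was just established. In fact, the positivity was already shown inside the induction step of that lemma, so the task here is essentially to extract and package that argument cleanly.

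First, I would invoke Lemma~\ref{lem:k_L_repr} to write
\begin{equation*}
L^k(\si,\si') = \sum_{z_{1:k-1}} q(\si,z_1) q(z_1,z_2) \cdots q(z_{k-1},\si'),
\end{equation*}
where the sum runs over intermediate sequences of distinct states so that $(\si, z_1, \ldots, z_{k-1}, \si')$ is a path of length $k$. Since transition rates are non-negative, every summand is non-negative, so it suffices to exhibit a single summand that is strictly positive.

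Next, I would use the hypothesis $d(\si,\si')=k$ together with the definition of the geodesic distance in~\eqref{eq:geodesic}: by minimality, the set $\text{Path}(\si\to\si')$ is non-empty and contains some $\vec{z}=(b_0,b_1,\ldots,b_k)$ with $b_0=\si$ and $b_k=\si'$. By the very definition of a path recalled in Section~\ref{sec:general}, $\prod_{i=0}^{k-1} q(b_i,b_{i+1}) > 0$, i.e.\ each transition rate $q(b_i,b_{i+1})$ along this geodesic is strictly positive.

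Finally, since this geodesic path appears as one of the terms in the sum above, we conclude
\begin{equation*}
L^k(\si,\si') \;\geq\; q(b_0,b_1)\, q(b_1,b_2)\cdots q(b_{k-1},b_k) \;>\; 0.
\end{equation*}
There is no real obstacle here; the only subtlety worth mentioning is the boundary cases $k=0$ (where $L^0(\si,\si')=\delta_{\si'}(\si)=1>0$ since $d(\si,\si')=0$ forces $\si=\si'$) and $k=1$ (where $L(\si,\si')=q(\si,\si')>0$ by the path definition), both of which are handled directly and match the base cases used in the proof of Lemma~\ref{lem:k_L_repr}.
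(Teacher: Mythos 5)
Your proposal is correct and follows essentially the same route as the paper: the paper's proof simply cites Lemma~\ref{lem:k_L_repr}, whose own proof already contains exactly your argument (non-negativity of every summand in the path-sum representation plus strict positivity of the term coming from a geodesic path). Your explicit treatment of the $k=0$ and $k=1$ cases is a harmless addition that matches the base cases of that lemma.
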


\begin{proof}
\ouredit{This is a corollary of Lemma~\ref{lem:k_L_repr}.}
\end{proof}

\section{\ouredit{Highest-order coefficients for Lie and Strang operator splitting schemes}}
\label{sec:suppl}

Let $L$ be a bounded operator, which allows us to represent the semigroup $e^{Lt}$ via a power series expansion. We shall use the notation $L(\si,\si'):=L[\de_\si'](\si)$, with which we have
\begin{align}
P_{t}(\si,\si')=e^{Lt}\de_{\si'}(\si)=\sum_{k=0}^{\infty}\frac{L^k(\si,\si')}{k!}t^k.
\end{align}

We assume that we can write an expansion for $\Pb$ too by representing each
semigroup in Equation~\eqref{eq:lie} and Equation~\eqref{eq:strang} by its
series and then multiplying out. By this process, we get 
\begin{align}
\label{app:eq:approx_expansion}
\Pb(\si,\si')=\sum_{k=0}^{\infty}\frac{L^k_Q(\si,\si')}{k!}\Dt^k,
\end{align}
where $L_Q^k$ represents the terms of order $k$ in the expansion of $\Pb$. For
example, for the Lie splitting, $L_{\Lie}^0=I, L_{\Lie}^1=L, L_{Lie}^2=(L_1^2+L_2^2+2L_1L_2)$. \secondRef{In general, the exact form of $L_Q^k$ can be computed by using the BCH formula.} This notation is picked for clarity and does not imply that $L_Q$ is a generator of a Markov process. \secondRef{As such, $L_Q^k$ does not equal $k$ compositions of $L_Q $, except if $k<p$, $p$ being the order of the local error for the operator splitting scheme.}

\secondRef{Lemma~\ref{lem:rer_coeff} and Lemma~\ref{lem:I_coef} demonstrate the form of the highest-order coefficients of the RER and the discrepancy for the Lie and Strang schemes in the case that $d(\si,\si
')=1$ implies $\si' = \si^x$ for some $x$ in the lattice. This includes the adsorption/desorption systems, an example of which was demonstrated in Section~\ref{sec:spparks}.}

\begin{lemma}
\label{lem:rer_coeff}
Under the assumptions of Lemma~\ref{th:rer_result}, if $A_\Lie (A_\Strang)$ is the highest order coefficient of the RER for the Lie (Strang) splitting, then
\begin{equation}
\begin{aligned}
&A_\Lie=\mathbb{E}_{\mu_{\Lie}(\si)}\left[\sum_{x,y\in \Lambda}F_\Lie(\si,\si^{x,y})\right]=\sum_{\si}\mu_{\Lie}(\si)\sum_{x,y\in \Lambda}F_\Lie(\si,\si^{x,y}),\\
&F_\Lie(\si,\si'):=C_{\Lie}(\si,\si')M_{\Lie}(\si,\si')-2L^2_{\Lie}[\de_{\si'}](\si)(\mathrm{arctanh}(M_\Lie(\si,\si'))-M_\Lie(\si,\si')),\\
&M_\Lie(\si,\si'):=C_{\Lie}(\si,\si')/(L^{2}_{\Lie}[\de_{\si'}(\si)]+C_{\Lie}(\si,\si'))
\end{aligned}
\label{eq:A_L}
\end{equation}
\ouredit{$C_\Lie$ stands for the commutator of the Lie scheme, $C_{\Lie}(\si,\si')=[L_1,L_2]\de_{\si'}(\si)$}. For the Strang splitting, 
\begin{equation}
\begin{aligned}
&A_\Strang=\mathbb{E}_{\mu_{\Strang}(\si)}\left[\sum_{x,y,z\in \Lambda}F_\Strang(\si,\si^{x,y,z})\right]=\sum_{\si}\mu_{\Strang}(\si)\sum_{x,y,z\in \Lambda}F_\Strang(\si,\si^{x,y,z}),\\
&F_\Strang(\si,\si'):=C_\Strang(\si,\si')M_\Strang(\si,\si')-2L^3_{\Strang}[\de_{\si'}](\si)(\mathrm{arctanh}(M_\Strang(\si,\si'))-M_\Strang(\si,\si')),\\
&M_\Strang(\si,\si'):=C_\Strang(\si,\si')/(L^{3}_{\Strang}[\de_{\si'}](\si)+C_\Strang(\si,\si')).
\end{aligned}
\label{eq:A_S}
\end{equation} 
\end{lemma}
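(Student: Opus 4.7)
The plan is to adapt the strategy from the proof of Lemma~\ref{th:I_p}, with adjustments for the fact that the relative entropy rate uses $\log(\Pb/\Po)$ rather than $\log(\Po/\Pb)$ and aligns the weight $\Pb(\si,\si')$ with the same ordered pair as the ratio. Starting from the definition in~\eqref{eq:rer} and the identity $\log(a/b)=2\arctanh((a-b)/(a+b))$ with its power series, I would write
\begin{equation*}
\Dt\cdot H(\Pb|\Po) = 2\sum_{\si,\si'}\mb(\si)\Pb(\si,\si')\sum_{k=0}^{\infty}\frac{1}{2k+1}\left(\frac{\Pb(\si,\si')-\Po(\si,\si')}{\Pb(\si,\si')+\Po(\si,\si')}\right)^{2k+1}.
\end{equation*}
Applying the expansions~\eqref{eq:exact-expansion},~\eqref{app:eq:approx_expansion} together with Lemma~\ref{lem:p_and_C} yields $\Pb-\Po = -C\Dt^p + O(\Dt^{p+1})$ on every pair, while $\Pb+\Po$ is expanded according to the distance $d:=d(\si,\si')$.

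Next I would split the outer sum by $d$. For pairs with $d>p$ both $\Pb$ and $\Po$ are $O(\Dt^d)$ and $C\equiv 0$ on such pairs (since $L^p(\si,\si')=L^p_Q(\si,\si')=0$ whenever $d>p$ in the spin-flip setting), so these contributions are subleading. For pairs with $d<p$, the scheme matching $L^k_Q=L^k$ for $k<p$ forces the arctanh ratio to be $O(\Dt^{p-d})$, so only the $k=0$ term contributes at leading order, yielding a per-pair amount of $-C(\si,\si')\Dt^{p-1}$ after normalisation. For pairs with $d=p$, $\Pb\approx L^p_Q\Dt^p/p!$ and the ratio equals $-M(\si,\si')+O(\Dt)$ with $M$ defined exactly as in the proof of Lemma~\ref{th:I_p}; combining the $k=0$ piece with the $k\ge 1$ tail produces the per-pair contribution $-\tfrac{2}{p!}L^p_Q(\si,\si')\arctanh(M(\si,\si'))\Dt^{p-1}$.

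The key simplification is the probability-preservation identity $\sum_{\si'}C(\si,\si')=0$, valid because $\sum_{\si'}L^k(\si,\si')=\sum_{\si'}L^k_Q(\si,\si')=0$ for $k\ge 1$. Since $C\equiv 0$ for $d>p$, this identity lets us rewrite $-\sum_{d<p}\mb(\si)C(\si,\si')$ as $\sum_{d=p}\mb(\si)C(\si,\si')$, absorbing the $d<p$ contribution into the $d=p$ sum and giving an expression of the form
\begin{equation*}
H(\Pb|\Po) = \Dt^{p-1}\sum_{\si}\mb(\si)\sum_{\si'\in S_p(\si)}\left[C(\si,\si')-\tfrac{2}{p!}L^p_Q(\si,\si')\arctanh(M(\si,\si'))\right]+o(\Dt^{p-1}).
\end{equation*}
I would then use the defining relation $M(C+2L^p_Q/p!)=C$ to pull out the $CM$ and $L^p_Q(\arctanh(M)-M)$ structures that appear in $F_\Lie$ and $F_\Strang$. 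For the adsorption/desorption setting, states $\si'\in S_p(\si)$ are parametrised as $\si^{x_1,\ldots,x_p}$ with lattice sites $x_i$, and tuples that produce $C=0$ (non-boundary or degenerate) contribute $F=0$ via $M=0$ and $\arctanh(0)=0$, converting the sum over $S_p(\si)$ into the tuple sum over $\Lambda^p$ in the Lemma. The Strang case follows the same algorithm with $p=3$.

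The main obstacle will be the careful sign and coefficient bookkeeping in the $k=0$/$k\ge 1$ arctanh recombination and in the $\sum_{\si'}C=0$ rewriting, as well as tracking ordered-tuple multiplicities when rewriting sums over $S_p(\si)$ as sums over $\Lambda^p$; unlike the situation in Lemma~\ref{th:I_p}, here the $d<p$ terms are absorbed rather than contributing a separate $L^i(\si,\si')/L^i(\si',\si)$ piece, because RER lacks the state-reversal asymmetry of the discrepancy and hence the cancellation through probability preservation is available.
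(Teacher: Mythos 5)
Your proposal is correct and takes essentially the same route as the paper, whose own ``proof'' is only a citation to Theorem 5.2 of~\cite{RER-paper}: that argument is precisely the $\arctanh$-expansion-by-distance machinery you adapt from the proof of Lemma~\ref{th:I_p}, with the stochasticity identity $\sum_{\si'}C(\si,\si')=0$ used to fold the $d<p$ contributions into the $d=p$ sum and the relation $M(C+2L^p_Q/p!)=C$ producing the $CM$ and $L^p_Q(\arctanh(M)-M)$ structure of $F$. The only residual issue is the constant in front of $L^p_Q$ (your $2/p!$ versus the stated $2L^2_\Lie$, $2L^3_\Strang$) and the matching denominator of $M$, but the paper is itself inconsistent on these between Lemma~\ref{th:I_p} and Lemmas~\ref{lem:rer_coeff}--\ref{lem:I_coef}, and your bookkeeping is the principled one.
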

\begin{proof}
\secondRef{See proof of Theorem 5.2 in~\cite{RER-paper}.}
\end{proof}

Similarly, from the proof of Lemma~\ref{th:I_p}, Section~\ref{sec:general}, \secondRef{and specifically Equation~\eqref{eq:I_k=0_terms}}, we
can write down the highest-order coefficient for the discrepancy. 

\begin{lemma}
\label{lem:I_coef}
Under the assumptions of Theorem~\ref{th:I_p}, if $D_\Lie (D_\Strang)$ is the highest order coefficient of $I$ for the Lie (Strang) splitting, then
\begin{equation}
\begin{aligned}
D_\Lie=&\sum_{\si}\mu_{\Lie}(\si)\sum_{x\in \Lambda }\frac{q(\si,\si^x)}{q(\si^x,\si)}C_\Lie(\si^x,\si)\\
&+\sum_{x,y\in \Lambda}\mu_{\Lie}(\si)L_\Lie^2(\si,\si^{x,y})\arctanh(M_\Lie(\si^{x,y},\si)).
\end{aligned}
\label{eq:D_L}
\end{equation}
and
\begin{equation}
\begin{aligned}
D_\Strang=&\sum_{\si}\mu_{\Strang}(\si)\left(\sum_{x\in \Lambda }\ouredit{\frac{q(\si,\si^x)}{q(\si^x,\si)}}C_\Strang(\si^x,\si)+\sum_{x,y\in \Lambda}\frac{L^2(\si,\si^{x,y})}{L^2(\si^{x,y},\si)}C_\Strang(\si^{x,y},\si)\right.\\
&+\left.\sum_{x,y,z\in \Lambda}L_\Strang^3(\si,\si^{x,y,z})\frac{1}{3}\arctanh(M_\Strang(\si^{x,y,z},\si))\right).
\end{aligned}
\label{eq:D_S}
\end{equation}
\end{lemma}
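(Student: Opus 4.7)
The plan is to specialize the general asymptotic expansion of $I(\Pb|\Po)$ obtained in the proof of Lemma~\ref{th:I_p} (Equation~\eqref{eq:I_asympt}) to the Lie scheme ($p=2$) and the Strang scheme ($p=3$), and then identify the sets $S_i(\si)$ with tuples of lattice sites under the single-spin-flip dynamics. For Lie, reading off the $\Dt^{p-1}=\Dt$ coefficient in Equation~\eqref{eq:I_asympt} gives
\begin{equation*}
\sum_\si \mu_\Lie(\si)\sum_{i=0}^{1}\sum_{\si'\in S_i(\si)}\frac{L^i(\si,\si')}{L^i(\si',\si)}C_\Lie(\si',\si) + \sum_\si \mu_\Lie(\si)\sum_{\si'\in S_2(\si)} L_\Lie^2(\si,\si')\arctanh(M_\Lie(\si',\si)),
\end{equation*}
which is already $D_\Lie$ up to rewriting. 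It remains to (i) show the $i=0$ summand vanishes; (ii) rewrite the $i=1$ sum using $S_1(\si)=\{\si^x:x\in\Lambda\}$ and $L(\si,\si^x)=q(\si,\si^x)$; and (iii) rewrite the $i=2$ sum using $S_2(\si)=\{\si^{x,y}:x,y\in\Lambda\}$ together with the prefactor $\tfrac{2}{2!}=1$.

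Step (i) is where I expect the only real work. I would argue $C_\Lie(\si,\si)\propto [L_1,L_2]\de_\si(\si)$ and expand this using the generator formula~\eqref{eq:generator}. Under the domain-decomposition hypothesis in Remark~\ref{rem:assumption}, $q_1$ and $q_2$ are supported on transitions in disjoint sub-lattices, so any cross product $q_1(\si,\si'')q_2(\si'',\si)$ is forced to vanish (leaving $\si$ via a sub-lattice-$1$ flip and returning via a sub-lattice-$2$ flip is impossible), while the diagonal terms $q_1(\si)q_2(\si)-q_2(\si)q_1(\si)$ simply cancel. This gives $[L_1,L_2]\de_\si(\si)=0$ and disposes of the $i=0$ contribution. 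Steps (ii) and (iii) are then mechanical substitutions using Lemma~\ref{lem:k_L_repr}.

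For the Strang case the plan is identical with $p=3$: the inner sum now runs over $i=0,1,2$, producing the additional middle term with prefactor $L^2(\si,\si^{x,y})/L^2(\si^{x,y},\si)$, while the top-order contribution picks up $\tfrac{2}{3!}=\tfrac{1}{3}$ and ranges over $S_3(\si)=\{\si^{x,y,z}\}$. The only extra difficulty is the analogue of the $i=0$ vanishing, for which I would use that $C_\Strang$ is a BCH combination of iterated brackets such as $[L_1,[L_1,L_2]]$ and $[L_2,[L_2,L_1]]$; each such bracket applied to $\de_\si$ and evaluated at $\si$ decomposes into sums whose nontrivial factors contain a consecutive pair $q_i(\cdot,\cdot)q_j(\cdot,\cdot)$ with $i\neq j$, all of which vanish by the disjoint-support argument. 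The main obstacle, accordingly, is the Strang diagonal vanishing, since the BCH bookkeeping is the only step that is not a direct substitution.
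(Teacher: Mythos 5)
Your route is the paper's route: Lemma~\ref{lem:I_coef} is obtained there with no further argument beyond evaluating the general expansion~\eqref{eq:I_asympt} at $p=2$ (Lie) and $p=3$ (Strang) and identifying $S_i(\si)$ with $i$-fold spin-flip neighbours, and your steps (ii)--(iii), including the prefactors $2/2!=1$ and $2/3!=1/3$, reproduce the stated formulas exactly. Two remarks on your step (i), which the paper itself passes over in silence (the $i=0$ term of~\eqref{eq:I_asympt} is simply absent from~\eqref{eq:D_L} and~\eqref{eq:D_S}, so flagging it is a genuine improvement in rigor). Your Lie computation is correct: $[L_1,L_2]\de_{\si}(\si)=\sum_{\si''}\bigl(q_1(\si,\si'')q_2(\si'',\si)-q_2(\si,\si'')q_1(\si'',\si)\bigr)$, and each closed two-step loop must leave $\si$ through one sub-lattice and return through the other, which is impossible. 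For Strang, however, the mechanism you invoke is not right as stated: consecutive mixed products such as $q_1(\si,\si^x)\,q_2(\si^x,\si^{x,y})$ do \emph{not} vanish under the domain decomposition --- they are precisely the boundary terms that make $C_{\Strang}$ nonzero off the diagonal, and if they all vanished there would be no splitting error at all. What actually kills the diagonal term for single-spin-flip dynamics is parity: any genuine three-step path $\si\to z_1\to z_2\to\si$ would return to $\si$ after an odd number of flips, which is impossible, so every path contribution to the iterated brackets evaluated at $(\si,\si)$ is zero; one must then verify that the remaining escape-rate (diagonal-part) contributions cancel within each bracket, a short direct computation. With that repair your argument is complete and coincides with what the paper leaves implicit.
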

\ouredit{Note that the sums over the lattice sites $x,y$ are in fact sums over the boundary region due to the properties of the commutator, see discussion around Remark~\ref{rem:scaling}. Although the coefficients have forms that depend on the transition rates and are given here explicitly, their estimation can be complicated for more complex systems, see diffusion example in~\ref{app:diffusion}.}

\section{Adsorption/Desorption Example}
\label{app:Adsorption/Desorption}
Here we include the setup for the adsorption/desorption example we simulated
with the help of SPPARKS. 

Let $\Lambda\subset \mathbb{Z}^2$ be a bounded, two-dimensional integer lattice \ouredit{with dimensions $N\times N$}.
To every lattice site $x$ corresponds a spin variable $\si(x)$, $\si(x)\in
\Sigma=\{0,1\}$, where $\si(x)=0$ denotes that site $x$ is empty and $\si(x)=1$
that the site is occupied by some particle. The transition rates will correspond
to single spin-flip Arrhenius dynamics. If we fix a state $\sigma\in S$ and a
 lattice site $x\in \Lambda$, then the transition rates $q$ are defined by

\begin{align}
q(\si,\si^x)&=q(x,\si)=c_1(1-\si(x))+c_2\si(x)e^{-\beta U(x)},\label{eq:ising_q}\\
U(x,\sigma)&=J_0\sum_{y\in \Omega_{x}}\si(y)+h.\label{eq:ising_U}
\end{align}
The constants, $c_1,c_2,\beta, J_0, h$, can be tuned to generate different dynamics. $\si^x$ is the resulting state after starting with $\si$ and changing $\si(x)$ to $1-\si(x)$. $\Omega_x$ represents the set of lattice sites that are neighbors of $x$. For this model, $\Omega_x$ will just be the nearest neighbors of $x$, like in Figure~\ref{fig:PL_KMC}.  The single spin-flip process, defined by the transition rates
in~\eqref{eq:ising_q}, satisfies detailed balance and can be simulated exactly
via Kinetic Monte Carlo.

\ouredit{To produce Figures~\ref{fig:estimate-points},~\ref{fig:epr-comparison}, and~\ref{fig:rer-comparison-Lie}, we simulated an adsorption-desorption system with the Lie and Strang schemes in SPPARKS, with rate constants $c_1=c_2=1,\beta=2,J_0=0.3$, and $h=0.9$, and starting from various configurations of the lattice. The particular plots are for the case that in every lattice site sits a particle.  In order to estimate the EPR by using the expressions in Lemmas~\ref{lem:rer_coeff} and~\ref{lem:I_coef}, we selected a splitting time-step $\Dt=0.001$ and simulated the process in time for $T=100, N=100$, while simultaneously tracking the mean coverage of the lattice (to assess equilibration of the system). Then the approximation to the EPR for the $\Dt$ considered is given by $(A+D)\Dt^{p-1}$.}

\section{Estimators of the EPR for a Diffusion Process}
\label{app:diffusion}
To show how the calculation of the estimators would change under a different model, we shall now demonstrate the case of a diffusion process. Let us assume that it is modeled by the set of transition rates 
\begin{align}
q(x,y,\si)=p(x,y)\si(x)(1-\si(y)),\ x,y\in \Lambda,
\label{eq:exclusion_rates}
\end{align}
for some state $\sigma$. At each time step, the system can swap the values between two lattice sites, $x,y$. $p(x,y)$ corresponds to some decaying potential that captures the distance a particle can travel. For instance, for nearest neighbor jumps, we would have $p(x,y)=1/4$ if $|x-y|=1$, and $p(x,y)=0$ otherwise. Note that the transition rates $q$ are zero if the origin site $x$ is empty or if the target site $y$ is occupied. 

We focus on the case of computing the discrepancy term, $I$, for the Lie
splitting with a splitting of the generator $L$ into $L_1+L_2$. Nevertheless,
this example will also be instructive for the case of the relative entropy rate
and other splittings. Theorems~\ref{th:rer_result} and~\ref{th:I_p} make no assumption on the
underlying model. They do however use the notion of distance between states that
the transition rates define (see discussion at the beginning of
Section~\ref{sec:general}). For this model, two states $\si,\si'$, are one jump
apart if there exist \ouredit{distinct} lattice sites $x,y$ such that $\si'=\si^{x,y} $, and two
jumps apart if there exist \ouredit{distinct} $x,y,z,w$ such that $\si'=\si^{x,y,z,w}$. \secondRef{The notation $\si^{x,y,z,w}$ denotes the resulting state after starting with a state $\si$ and carrying out spin-flips at the lattice locations $x,y,z,w$. }

\ouredit{After computing the corresponding commutator, $C_{\Lie}(\si,\si')=[L_1,L_2]\des$, and $L^2_{\Lie}$, we can write the exact formula for the highest order coefficient for the Lie splitting as}

\begin{equation}
\begin{aligned}
D_\Lie=&\sum_{\si}\mu_{\Lie}(\si)\sum_{x,y\in \Lambda }\frac{q(\si,\si^{x,y})}{q(\si^{x,y},\si)}C_\Lie(\si^{x,y},\si)\\
&+\sum_{x,y,z,w\in \Lambda}\mu_{\Lie}(\si)L_\Lie^2(\si,\si^{x,y,z,w})\arctanh(M_\Lie(\si^{x,y,z,w},\si)).
\end{aligned}
\label{eq:D_L_diffusion}
\end{equation}
Since the commutator $C_{\Lie}$ is zero for all choices of lattice
sites but those at the boundaries between sub-lattices, $\partial\Lambda$, Equation~\eqref{eq:D_L_diffusion} is actually 
\begin{equation}
\begin{aligned}
D_\Lie=&\sum_{\si}\mu_{\Lie}(\si)\sum_{x,y\in \partial\Lambda }\frac{q(\si,\si^{x,y})}{q(\si^{x,y},\si)}C_\Lie(\si^{x,y},\si)\\
&+\sum_{x,y,z,w\in \partial\Lambda}\mu_{\Lie}(\si)L_\Lie^2(\si,\si^{x,y,z,w})\arctanh(M_\Lie(\si^{x,y,z,w},\si)).
\end{aligned}
\label{eq:D_L_diffusion_boundaries}
\end{equation}
Therefore, for nearest neighbor interactions in a square $N\times N$ lattice, the coefficient in~\eqref{eq:D_L_diffusion_boundaries} has
cost of computation $O(N^2)$. \firstRef{Note that the difference in scaling of the cost is because of the underlying diffusion dynamics and which imply that $d(\si,\si')=1$, that is, the states the system can reach in one step from $\si$, are precisely $\si'=\si^{x,y}$ for $x,y$ distinct lattice sites.} However, estimating
coefficient~\eqref{eq:D_L_diffusion_boundaries} is more of a diagnostic that
does not have to be computed while simulating the large system, \ouredit{which is why we normalize coefficients by their scaling while estimating}.

%%% Local Variables: 
%%% mode: latex
%%% TeX-master: "../main.tex"
%%% End: 

\end{document}